\numberwithin{equation}{section}
\newtheorem{theorem}{Theorem}[section]
\newtheorem{lemma}[theorem]{Lemma}
\newtheorem{corollary}[theorem]{Corollary}
\title{Characterizations of centralizers and derivations on some algebras}
\author{\begin{tabular}{c} Jun He, Jiankui Li\footnote{Corresponding author.
E-mail address: jiankuili@yahoo.com},  and Wenhua Qian
\\{\small\it Department of Mathematics, East China University of
Science and Technology}\\
{\small\it Shanghai 200237, China}
\end{tabular}}
\date{}
\begin{document}
\maketitle \abstract
A linear mapping $\phi$ on an algebra $\mathcal{A}$ is called
a centralizable mapping at $G\in\mathcal{A}$ if $\phi(AB)=\phi(A)B=A\phi(B)$
for each $A$ and $B$ in $\mathcal{A}$ with $AB=G$, and $\phi$ is called a
derivable mapping at $G\in\mathcal{A}$ if $\phi(AB)=\phi(A)B+A\phi(B)$
for each $A$ and $B$ in $\mathcal{A}$ with $AB=G$. A point $G$ in $\mathcal{A}$ is called a
full-centralizable point (resp. full-derivable point) if every centralizable (resp. derivable)
mapping at $G$ is a centralizer (resp. derivation).
We prove that every point in a von Neumann algebra or a triangular algebra
is a full-centralizable point.
We also prove that a point in a von Neumann algebra is a full-derivable point
if and only if its central carrier is the unit.

\
{\textbf{Keywords:}} Centralizer, derivation, full-centralizable point, full-derivable point, von Neumann algebra, triangular algebra

\
{\textbf{Mathematics Subject Classification(2010):}} 47B47; 47L35

\
\section{Introduction}\

Let $\mathcal{A}$ be an associative algebra over the complex field $\mathbb{C}$, and $\phi$ be a linear mapping from $\mathcal{A}$ into itself.
$\phi$ is called a \emph{centralizer} if $\phi(AB)=\phi(A)B=A\phi(B)$ for each $A$ and $B$ in $\mathcal{A}$.
Obviously, if $\mathcal{A}$ is an algebra with unit $I$, then $\phi$ is a centralizer if and only if $\phi(A)=\phi(I)A=A\phi(I)$ for every $A$ in $\mathcal{A}$.
$\phi$ is called a \emph{derivation} if $\phi(AB)=\phi(A)B+A\phi(B)$ for each $A$ and $B$ in $\mathcal{A}$.

A linear mapping $\phi:\mathcal{A}\rightarrow\mathcal{A}$ is called
a \emph{centralizable mapping at $G\in\mathcal{A}$} if $\phi(AB)=\phi(A)B=A\phi(B)$
for each $A$ and $B$ in $\mathcal{A}$ with $AB=G$, and $\phi$ is called a
\emph{derivable mapping at $G\in\mathcal{A}$} if $\phi(AB)=\phi(A)B+A\phi(B)$
for each $A$ and $B$ in $\mathcal{A}$ with $AB=G$. An element $G$ in $\mathcal{A}$ is called a
\emph{full-centralizable point (resp. full-derivable point)} if every centralizable (resp. derivable)
mapping at $G$ is a centralizer (resp. derivation).

In \cite{501}, Bre\v{s}ar proves that if $\mathcal{R}$ is a prime ring with a nontrival
idempotent, then $0$ is a full-centralizable point. In \cite{502}, X. Qi and J. Hou
characterize centralizable and derivable
mappings at 0 in triangular algebras. In \cite{503}, X. Qi proves that every nontrival
idempotent in a prime ring is a full-centralizable point. In \cite{519}, W. Xu, R. An and J. Hou
prove that every element in $B(\mathcal{H})$ is a full-centralizable point, where $\mathcal{H}$ is a Hilbert space.
For more information on centralizable and derivable
mappings, we refer to \cite{509,505,506,508,504,507}.

For a von Neumann algebra $\mathcal{A}$, the \emph{central carrier} $\mathcal{C}(A)$ of an element $A$ in $\mathcal{A}$
is the projection $I-P$, where $P$ is the union of all central projections $P_{\alpha}$ in $\mathcal{A}$ such that $P_{\alpha}A=0$.

This paper is organized as follows. In Section 2,
by using the techniques about central carriers, we show that every element in a von Neumann algebra is a full-centralizable point.

Let $\mathcal{A}$ and $\mathcal{B}$ be two unital algebras over the complex field $\mathbb{C}$, and $\mathcal{M}$ be a unital $(\mathcal{A}, \mathcal{B})$-bimodule
which is faithful both as a left $\mathcal A$-module and a right $\mathcal B$-module. The algebra

$$Tri(\mathcal{A},\mathcal{M},\mathcal{B})=\left\{\left[
                   \begin{array}{cc}
                     A & M \\
                     0 & B \\
                   \end{array}
                 \right]:A\in\mathcal{A}, B\in\mathcal{B}, M\in\mathcal{M}\right\}$$
under the usual matrix addition and matrix multiplication is called a \emph{triangular algebra}.

In Section 3, we show that if $\mathcal{A}$ and $\mathcal{B}$ are two unital Banach algebras,
then every element in $Tri(\mathcal{A},\mathcal{M},\mathcal{B})$ is a full-centralizable point.

In Section 4, we show that for every point $G$ in a von Neumann algebra $\mathcal{A}$,
if $\Delta$ is a derivable mapping at $G$, then $\Delta=D+\phi$, where $D:\mathcal{A}\rightarrow\mathcal{A}$ is a derivation
and $\phi:\mathcal{A}\rightarrow\mathcal{A}$ is a centralizer.
Moreover, we prove that $G$ is a full-derivable point if and only if $\mathcal{C}(G)=I$.

\section{Centralizers on von Neumann algebras}\

In this section, $\mathcal{A}$ denotes a unital algebra and $\phi:\mathcal{A}\rightarrow\mathcal{A}$
is a centralizable mapping at a given point $G\in\mathcal A$. The main result is the following theorem.

\begin{theorem}
Let $\mathcal{A}$ be a von Neumann algebra acting on a Hilbert space $\mathcal H$. Then every element $G$ in $\mathcal{A}$ is a full-centralizable point.
\end{theorem}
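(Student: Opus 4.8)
The plan is to show directly that $\phi(A)=\phi(I)A=A\phi(I)$ for every $A\in\mathcal A$, with $\phi(I)$ central; since $\mathcal A$ is unital, this is exactly the assertion that $\phi$ is a centralizer. Two trivial factorizations get things started: applying the centralizable condition to $G=IG$ and $G=GI$ gives $\phi(G)=\phi(I)G=G\phi(I)$, so in particular $\phi(I)$ commutes with $G$.

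The engine of the argument is a perturbation by invertibles. For any invertible $X$ one has $(GX^{-1})X=G=X(X^{-1}G)$, so the centralizable condition yields $GX^{-1}\phi(X)=\phi(G)=G\phi(I)$ and $\phi(X)X^{-1}G=\phi(G)=\phi(I)G$. Substituting $X=I+tA$ (which is invertible for small $t$, with $(I+tA)^{-1}$ given by its Neumann series and $\phi(I+tA)=\phi(I)+t\phi(A)$) and extracting the coefficient of $t$ gives, for every $A\in\mathcal A$, the two identities $G(\phi(A)-A\phi(I))=0$ and $(\phi(A)-\phi(I)A)G=0$. Let $E$ and $F$ be the range projections of $G$ and of $G^*$; since $TG=0\Leftrightarrow TE=0$ and $GT=0\Leftrightarrow FT=0$, these identities read $\phi(A)E=\phi(I)AE$ and $F\phi(A)=FA\phi(I)$. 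When $G$ is invertible, $E=F=I$ and we are already done, obtaining $\phi(A)=\phi(I)A=A\phi(I)$ at once.

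The substance lies in the non-invertible case, where $E$ and $F$ are proper and the above only pins down the components $\phi(A)E$ and $F\phi(A)$; the complementary components $\phi(A)(I-E)$ and $(I-F)\phi(A)$ remain undetermined because the left and right annihilators of $G$ are now nonzero. Here I would pass to the central carrier: with $P=I-\mathcal C(G)$ a central projection satisfying $PG=GP=0$, split $\mathcal A=P\mathcal A\oplus\mathcal C(G)\mathcal A$. On the summand $\mathcal C(G)\mathcal A$ one has $\mathcal C(E)=\mathcal C(F)=\mathcal C(G)=$ the identity, and I would recover the missing components by feeding further factorizations of $G$ into the centralizable condition --- namely $G=EG=GF$ and the factorization $G=V|G|$ through the partial isometry of the polar decomposition --- and then using the comparison fact $S\mathcal AT=0\Leftrightarrow\mathcal C(S)\mathcal C(T)=0$ for projections to convert annihilation relations with full central carrier into genuine equalities. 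On the summand $P\mathcal A$ the element $G$ becomes $0$, so there the problem degenerates to that of a centralizable mapping at $0$; this I would settle by the idempotent factorizations $E'(I-E')=0$ available for every idempotent $E'$, exactly the abundance of projections that lets the von Neumann setting dispense with the primeness hypothesis of Bre\v{s}ar's theorem.

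The main obstacle is precisely this non-invertible regime. The perturbation identities are silent about $\phi$ in the kernel directions of $G$, and they are entirely vacuous when $G=0$, so the heart of the proof is to replace them with von Neumann-algebraic structure --- support projections, central carriers, and comparison of projections --- in order to determine the complementary Peirce components of $\phi$ and to force $\phi(I)$ to be central. Once both multiplier identities $\phi(A)=\phi(I)A=A\phi(I)$ are in hand, the centralizer property $\phi(AB)=\phi(A)B=A\phi(B)$ follows immediately by associativity.
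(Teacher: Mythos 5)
Your preliminary analysis is sound: the perturbation identities $G(\phi(A)-A\phi(I))=0$ and $(\phi(A)-\phi(I)A)G=0$, hence $\phi(A)E=\phi(I)AE$ and $F\phi(A)=FA\phi(I)$, are correct, and they do dispose of invertible $G$. But the part you yourself call ``the heart of the proof'' is not carried out, and the tools you propose for it would not suffice. The factorizations you list --- $G=EG=GF$ and the polar decomposition $G=V|G|$ --- involve only finitely many fixed elements attached to $G$; feeding them into the centralizable condition yields information about $\phi(E)$, $\phi(F)$, $\phi(V)$, $\phi(|G|)$ and $\phi(G)$ only, and cannot determine the components $\phi(A)(I-E)$ and $(I-F)\phi(A)$ for an \emph{arbitrary} $A$. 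What is actually needed (and what the paper's Lemma 2.6 does) is a family of factorizations of $G$ parametrized by arbitrary elements of the Peirce corners $P_i\mathcal{A}P_j$ relative to $P_1=E$, $P_2=I-E$, for instance $(P_1+tA_{12})G=G$ and $(A_{11}+tA_{11}A_{12})(A_{11}^{-1}G-A_{12}A_{22}+t^{-1}A_{22})=G$; extracting the coefficients of powers of $t$ from these is what pins down $\phi$ on every corner and forces $\phi(A_{ij})=\phi(P_i)A_{ij}=A_{ij}\phi(P_j)$. Nothing in your sketch produces identities of this kind, and the comparison fact $S\mathcal{A}T=0\Leftrightarrow\mathcal{C}(S)\mathcal{C}(T)=0$ only converts such identities once you have them.

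Second, your two-piece splitting $\mathcal{A}=P\mathcal{A}\oplus\mathcal{C}(G)\mathcal{A}$ is too coarse. On $\mathcal{C}(G)\mathcal{A}$ you correctly note $\mathcal{C}(E)=\mathcal{C}(F)=$ identity, but the Peirce-type argument requires in addition that the complement $I-E$ have full central carrier, and this can fail on part of that summand. The paper therefore splits off a further summand $Q_1\mathcal{A}$, $Q_1=I-\mathcal{C}(I-E)$, on which $G$ has dense range; there the argument is entirely different (the separating-point argument of Lemma 2.4), and it needs injectivity of $G$ as well as dense range --- which is why the paper runs a separate case $\ker G\neq\{0\}$, resolved by applying the dense-range case to $G^*$ and transferring back via $\widetilde{\phi}(A)=(\phi(A^*))^*$ (Lemma 2.3). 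Your proposal contains no counterpart to either step. Finally, even the reduction to central summands needs proof: one must show $\phi(Q_i\mathcal{A})\subseteq Q_i\mathcal{A}$ and that the restrictions are centralizable at the restricted points (the paper's Lemma 2.2 does this with yet another invertible-element factorization); your sketch assumes this silently. So what you have is a correct treatment of the easy regime together with a plan for the hard regime that, as stated, would not go through.
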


Before proving Theorem 2.1, we need the following several lemmas.

\begin{lemma}
Let $\mathcal{A}$ be a unital Banach algebra with the form $\mathcal{A}=\sum\limits_{i\in\Lambda}\bigoplus\mathcal{A}_i$. Then $\phi(\mathcal{A}_i)\subseteq\mathcal{A}_i$. Moreover, suppose $G=\sum\limits_{i\in\Lambda}G_i$, where $G_i\in\mathcal{A}_i$. If $G_i$ is a full-centralizable point in $\mathcal{A}_i$ for every $i\in \Lambda$, then $G$ is a full-centralizable point in $\mathcal{A}$.
\end{lemma}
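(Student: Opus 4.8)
The plan is to work throughout with the central idempotents $E_i\in\mathcal A_i$ that serve as the units of the summands: they are mutually orthogonal, central in $\mathcal A$, and each $A\in\mathcal A$ has $\mathcal A_i$-component $A_i=E_iA$, with $A$ determined by the family $(E_iA)_{i}$; in particular $G_i=E_iG$. A guiding principle is to write every auxiliary factor in terms of $G$, $I$ and the $E_i$ rather than as an infinite sum of components, so that linearity of $\phi$ is only ever applied to finite combinations; this is what lets the argument survive an infinite index set $\Lambda$.

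For the first assertion I fix $i$ and $j\neq i$ and take an arbitrary $S\in\mathcal A_i$. Choosing $T\in\mathcal A_i$ with $ST=G_i$ and setting $X=S+G-G_i$, $Y=T+I-E_i$, a componentwise check gives $XY=G$; feeding this into $\phi(X)Y=\phi(G)$ and multiplying by $E_j$ makes the off-diagonal terms vanish and leaves $E_j\phi(S)=E_j\phi(G_i)$. The crucial point is that this holds for \emph{every} left divisor $S$ of $G_i$ in $\mathcal A_i$: in a unital Banach algebra every invertible $S$ is such a divisor (take $T=S^{-1}G_i$), and every element of $\mathcal A_i$ is a difference of two invertibles (for $|\lambda|$ large both $\lambda E_i+S$ and $\lambda E_i$ are invertible by a Neumann series). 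Since $E_j\phi$ takes the same value on all invertibles, linearity forces $E_j\phi$ to vanish on $\mathcal A_i$, i.e. $\phi(\mathcal A_i)\subseteq\mathcal A_i$. I expect this spanning-by-invertibles step to be the main obstacle, since it is the only place the Banach-algebra hypothesis is genuinely used and it is precisely what upgrades a condition on divisors of $G_i$ to a statement about all of $\mathcal A_i$.

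Applying the first assertion to the coarse two-term decomposition $\mathcal A=\mathcal A_i\oplus(I-E_i)\mathcal A$ gives the locality relation $E_i\phi(A)=\phi(E_iA)$ for every $A\in\mathcal A$; in particular $E_i\phi(G)=\phi(G_i)$ and $E_i\phi(I)=\phi(E_i)$. Now assume each $G_i$ is a full-centralizable point of $\mathcal A_i$. For $A_i,B_i\in\mathcal A_i$ with $A_iB_i=G_i$, the factors $X=A_i+G-G_i$ and $Y=B_i+I-E_i$ again multiply to $G$, and projecting $\phi(X)Y=\phi(G)=X\phi(Y)$ onto $\mathcal A_i$ collapses, via the locality relation, to $\phi(A_i)B_i=\phi(A_iB_i)=A_i\phi(B_i)$. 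Thus $\phi|_{\mathcal A_i}$ is centralizable at $G_i$, hence a centralizer on $\mathcal A_i$ by hypothesis, so $\phi(A_i)=C_iA_i=A_iC_i$ with $C_i:=\phi(E_i)\in Z(\mathcal A_i)$.

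It remains to reassemble. Putting $C:=\phi(I)$, the locality relation gives $E_iC=\phi(E_i)=C_i$, so that for an arbitrary $A\in\mathcal A$ and every $i$,
\[
E_i\phi(A)=\phi(E_iA)=C_iE_iA=E_iCA,\qquad E_i\phi(A)=E_iAC .
\]
Because an element of the direct sum is determined by its components $E_i(\cdot)$, these identities yield $\phi(A)=\phi(I)A=A\phi(I)$ for all $A$, and by the characterization of centralizers on a unital algebra recorded in the introduction, $\phi$ is a centralizer. Hence $G$ is a full-centralizable point of $\mathcal A$.
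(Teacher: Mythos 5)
Your proof is correct, and while it shares the paper's skeleton --- establish $\phi(\mathcal{A}_i)\subseteq\mathcal{A}_i$ by perturbing a factorization of $G$ one component at a time, reduce to invertible elements (which span a unital Banach algebra), then invoke full-centralizability of each $G_i$ and reassemble --- your execution of the reassembly is genuinely different and in fact tighter than the paper's. The paper proves the invariance $\phi(\mathcal{A}_i)\subseteq\mathcal{A}_i$ with essentially your factorization, in the form $(I-I_i+t^{-1}GA_i^{-1})((I-I_i)G+tA_i)=G$, extracting the coefficient of $t$ where you instead rescale $S$; but it then reassembles by writing $\phi(A)=\sum_i\phi_i(A_i)=\sum_i\phi_i(I_i)A_i=\phi(I)A$, i.e., it passes $\phi$ through the possibly infinite decomposition $A=\sum_i A_i$ and identifies $\phi(I)$ with $\sum_i\phi_i(I_i)$. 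For infinite $\Lambda$ such sums do not converge in norm (only coordinatewise, or strongly in the von Neumann setting), so that step tacitly uses a continuity of $\phi$ that has not been established. Your locality relation $E_i\phi(A)=\phi(E_iA)$ --- obtained by the neat device of applying the first assertion to the two-term decomposition $\mathcal{A}=E_i\mathcal{A}\oplus(I-E_i)\mathcal{A}$ --- lets you verify that $\phi|_{\mathcal{A}_i}$ is centralizable at $G_i$ and then compare $\phi(A)$ with $\phi(I)A$ and $A\phi(I)$ component by component, applying linearity only to finite combinations throughout. The payoff is that your argument is valid for an arbitrary index set $\Lambda$, exactly as the lemma is stated and as it is later used (e.g., for CDCSL algebras in Corollary 3.3, where $\Lambda$ may be infinite), whereas the paper's write-up is airtight only for finite $\Lambda$, which is all that Theorem 2.1 itself requires.
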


\begin{proof}
Let $I_i$ be the unit in $\mathcal{A}_i$. Suppose that $A_i$ is an invertible element in $\mathcal{A}_i$, and $t$ is an arbitrary nonzero element in $\mathbb{C}$. It is easy to check that
$$(I-I_i+t^{-1}GA_i^{-1})((I-I_i)G+tA_i)=G.$$
So we have
$$(I-I_i+t^{-1}GA_i^{-1})\phi((I-I_i)G+tA_i)=\phi(G).$$
Considering the coefficient of $t$, since $t$ is arbitrarily chosen, we have $(I-I_i)\phi(A_i)=0$. It follows that $\phi(A_i)=I_i\phi(A_i)\in\mathcal{A}_i$ for all invertible elements. Since $\mathcal{A}_i$ is a Banach algebra, every element can be written into the sum of two invertible elements. So the above equation holds for all elements in $\mathcal{A}_i$. That is to say $\phi(\mathcal{A}_i)\subseteq\mathcal{A}_i$.

Let $\phi_i=\phi \mid_{\mathcal{A}_i}$. For every $A$ in $\mathcal{A}$, we write $A=\sum\limits_{i\in\Lambda}A_i$. Assume $AB=G$. Since $A_iB_i=G_i$ and $\phi(\mathcal{A}_i)\subseteq\mathcal{A}_i$, we have
$$\sum\limits_{i\in\Lambda}\phi(G_i)=\sum\limits_{i\in\Lambda}\phi(A_i)\sum\limits_{i\in\Lambda} B_i=\sum\limits_{i\in\Lambda}\phi(A_i)B_i.$$
It implies that $\phi_i(G_i)=\phi_i(A_i)B_i$. Similarly, we can obtain $\phi_i(G_i)=A_i\phi_i(B_i)$. By assumption, $G_i$ is a full-centralizable point, so $\phi_i$ is a centralizer. Hence
$$\phi(A)=\sum\limits_{i\in\Lambda}\phi_i(A_i)=\sum\limits_{i\in\Lambda}\phi_i(I_i)A_i=\sum\limits_{i\in\Lambda}\phi_i(I_i)\sum\limits_{i\in\Lambda}A_i=\phi(I)A.$$
Similarly, we can prove $\phi(A)=A\phi(I)$.
Hence $G$ is a full-centralizable point.
\end{proof}

\begin{lemma}
Let $\mathcal{A}$ be a $C^*$-algebra. If $G^*$ is a full-centralizable point in $\mathcal{A}$, then $G$ is a full-centralizable point in $\mathcal{A}$.
\end{lemma}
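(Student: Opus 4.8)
The plan is to exploit the involution on $\mathcal{A}$ to transport a centralizable mapping at $G$ into one at $G^*$, apply the hypothesis there, and then carry the conclusion back. Concretely, given a centralizable mapping $\phi$ at $G$, I would introduce the auxiliary map $\psi:\mathcal{A}\to\mathcal{A}$ defined by $\psi(A)=\phi(A^*)^*$. Since $\phi$ is linear and the involution is conjugate-linear, the two conjugations cancel, so $\psi$ is again a complex-linear map; this is the only role the $*$-structure plays, and it is worth noting that nothing beyond an anti-multiplicative conjugate-linear involution is actually used.

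First I would verify that $\psi$ is centralizable at $G^*$. Suppose $C,D\in\mathcal{A}$ satisfy $CD=G^*$; taking adjoints gives $D^*C^*=G$, so centralizability of $\phi$ at $G$ yields $\phi(G)=\phi(D^*C^*)=\phi(D^*)C^*=D^*\phi(C^*)$. Applying the involution to these two identities and rewriting through $\psi$ — using $\psi(C)=\phi(C^*)^*$, $\psi(D)=\phi(D^*)^*$, and $\psi(G^*)=\phi(G)^*$ — gives $\psi(CD)=\psi(G^*)=\psi(C)D=C\psi(D)$. Hence $\psi$ is centralizable at $G^*$.

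Since $G^*$ is a full-centralizable point by hypothesis, $\psi$ must be a centralizer, so $\psi(XY)=\psi(X)Y=X\psi(Y)$ for all $X,Y\in\mathcal{A}$. Finally I would transport this back to $\phi$ via the relation $\phi(A)=\psi(A^*)^*$, which follows from the definition of $\psi$ together with $\psi(A^*)^*=\phi(A)$. For arbitrary $A,B$, writing $\phi(AB)=\psi(B^*A^*)^*$ and using that $\psi$ is a centralizer gives $\psi(B^*A^*)^*=A\psi(B^*)^*=\psi(A^*)^*B$, which after identifying $\psi(B^*)^*=\phi(B)$ and $\psi(A^*)^*=\phi(A)$ becomes $\phi(AB)=\phi(A)B=A\phi(B)$. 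Thus $\phi$ is a centralizer and $G$ is a full-centralizable point.

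As for the main obstacle, there is essentially no hard analytic content: the whole argument is a duality trick through the involution, and the only thing requiring care is the precise bookkeeping of adjoints. The single delicate point is establishing that $\psi$ is genuinely centralizable at $G^*$ rather than merely that $\psi(G^*)$ has the right value, since one must track both the left and the right factorization identities simultaneously as they pass through the adjoint.
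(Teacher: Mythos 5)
Your proposal is correct and follows essentially the same route as the paper: both define the auxiliary map $\psi(A)=\phi(A^*)^*$ (the paper calls it $\widetilde{\phi}$), verify via taking adjoints of the factorization $AB=G^*$ that it is centralizable at $G^*$, invoke the hypothesis to conclude $\psi$ is a centralizer, and transport this back to $\phi$. Your write-up merely makes explicit the final bookkeeping step (that $\psi$ being a centralizer forces $\phi$ to be one), which the paper leaves to the reader.
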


\begin{proof}
Define a linear mapping $\widetilde{\phi}:\mathcal{A}\rightarrow\mathcal{A}$ by: $\widetilde{\phi}(A)=(\phi(A^*))^*$
for every $A$ in $\mathcal{A}$.
For each $A$ and $B$ in $\mathcal{A}$ with $AB=G^*$, we have $B^*A^*=G$. It follows that $\phi(G)=\phi(B^*)A^*=B^*\phi(A^*)$. By the definition of $\widetilde{\phi}$,
we obtain $\widetilde{\phi}(G^*)=\widetilde{\phi}(A)B=A\widetilde{\phi}(B)$.
Since $G^{*}$ is a full-centralizable point in $\mathcal{A}$, we have that $\widetilde{\phi}$ is a centralizer. Thus
$\phi$ is also a centralizer. Hence $G$ is a full-centralizable point in $\mathcal{A}$.
\end{proof}

For a unital algebra $\mathcal{A}$ and a unital $\mathcal{A}$-bimodule $\mathcal{M}$,
an element $A\in\mathcal{A}$ is called a \emph{left separating point (resp. right separating point)} of $\mathcal{M}$
if $AM=0$ implies $M=0$ ($MA=0$ implies $M=0$) for every $M\in\mathcal{M}$.

\begin{lemma}
Let $\mathcal{A}$ be a unital Banach algebra and $G$ be a left and
right separating point in $\mathcal{A}$. Then $G$ is a full-centralizable point.
\end{lemma}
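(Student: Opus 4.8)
The plan is to show directly that $\phi(A)=\phi(I)A=A\phi(I)$ for every $A\in\mathcal A$; since $\mathcal A$ is unital, this is precisely the assertion that $\phi$ is a centralizer. The whole argument rests on producing, for a suitable family of elements $A$, two factorizations of $G$ of the forms $A\cdot(A^{-1}G)=G$ and $(GA^{-1})\cdot A=G$, and then cancelling $G$ by means of the separating hypotheses.

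First I would record the base relations coming from the trivial factorizations $I\cdot G=G=G\cdot I$. Applying the centralizable identity at $G$ to the pair $A=I$, $B=G$ gives $\phi(G)=\phi(I)G$, and applying it to $A=G$, $B=I$ gives $\phi(G)=G\phi(I)$. These serve as the fixed quantity against which the remaining factorizations are compared.

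Next, fix an invertible $A\in\mathcal A$ and set $B=A^{-1}G$, so that $AB=G$. The identity $\phi(G)=\phi(A)B$ then reads $\phi(G)=\phi(A)A^{-1}G$. Subtracting the base relation $\phi(G)=\phi(I)G$ yields $(\phi(A)A^{-1}-\phi(I))G=0$. Because $G$ is a right separating point, $MG=0$ forces $M=0$, and hence $\phi(A)=\phi(I)A$. Working symmetrically with the factorization $(GA^{-1})\cdot A=G$ and the identity $\phi(G)=(GA^{-1})\phi(A)$, subtracting $\phi(G)=G\phi(I)$ gives $G(A^{-1}\phi(A)-\phi(I))=0$, and the left separating property yields $\phi(A)=A\phi(I)$. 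Thus $\phi(A)=\phi(I)A=A\phi(I)$ for every invertible $A$.

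Finally I would remove the invertibility restriction exactly as in the proof of Lemma 2.2: since $\mathcal A$ is a unital Banach algebra, every element is a sum of two invertible elements (given $X\in\mathcal A$, choose $\lambda\neq0$ with $\lambda\notin\sigma(X)$ and write $X=(X-\lambda I)+\lambda I$), so the linearity of $\phi$ propagates the relations $\phi(A)=\phi(I)A=A\phi(I)$ to all of $\mathcal A$. I do not anticipate a serious obstacle here; the only points requiring care are to invoke the two separating hypotheses on the correct sides — right separating to cancel the trailing $G$ in the first computation and left separating to cancel the leading $G$ in the second — and to verify that the chosen factorizations genuinely multiply out to $G$.
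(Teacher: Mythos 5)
Your proof is correct and follows essentially the same route as the paper: factor $G$ as $X\cdot(X^{-1}G)$ and $(GX^{-1})\cdot X$ for invertible $X$, cancel $G$ using the right and left separating hypotheses respectively, and pass from invertible elements to all of $\mathcal{A}$ via the fact that every element of a unital Banach algebra is a sum of two invertibles. Your write-up is just slightly more explicit than the paper's (which dispatches the second identity with a ``similarly''), and you correctly match each separating hypothesis to the side on which $G$ must be cancelled.
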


\begin{proof}
For every invertible element $X$ in $\mathcal A$, we have
$$\phi(I)G=\phi(G)=\phi(XX^{-1}G)=\phi(X)X^{-1}G.$$
Since $G$ is a right separating point, we obtain $\phi(I)=\phi(X)X^{-1}$. It follows that $\phi(X)=\phi(I)X$ for each invertible element $X$ and so for all elements in $\mathcal A$. Similarly, we have that $\phi(X)=X\phi(I)$.
Hence $G$ is a full-centralizable point.
\end{proof}

\begin{lemma}
Let $\mathcal{A}$ be a von Neumann algebra. Then $G=0$ is a full-centralizable point.
\end{lemma}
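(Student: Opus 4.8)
The plan is to determine $\phi$ first on projections, then to show that $\phi(I)$ is central, and finally to extend the centralizer identity to all of $\mathcal{A}$. Throughout I use that, since $\phi(0)=0$, the hypothesis ``$\phi$ centralizable at $0$'' means precisely that $AB=0$ implies both $\phi(A)B=0$ and $A\phi(B)=0$.

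First I would exploit the orthogonal decomposition attached to an arbitrary projection $P$, writing $Q=I-P$. Since $PQ=QP=0$, the four instances of the defining identity give $\phi(P)Q=0$, $P\phi(Q)=0$, $\phi(Q)P=0$, and $Q\phi(P)=0$. The first and fourth yield $\phi(P)=\phi(P)P=P\phi(P)$, so that $\phi(P)$ lies in the corner $P\mathcal{A}P$. Substituting $\phi(Q)=\phi(I)-\phi(P)$ into the second and third relations then gives $\phi(I)P=\phi(P)P=\phi(P)$ and $P\phi(I)=P\phi(P)=\phi(P)$. Hence $\phi(P)=\phi(I)P=P\phi(I)$ for every projection $P$.

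The resulting identity $\phi(I)P=P\phi(I)$ for all projections $P$ shows that $\phi(I)$ commutes with every projection of $\mathcal{A}$; since the linear span of the projections of a von Neumann algebra is weak-operator dense and multiplication is separately weak-operator continuous, $\phi(I)$ lies in the center $\mathcal{Z}(\mathcal{A})$. Consequently $A\mapsto\phi(I)A$ is itself a centralizer, and I may pass to the linear map $\psi(A):=\phi(A)-\phi(I)A$, which is again centralizable at $0$ (the condition is linear in the map) and now satisfies $\psi(P)=0$ for every projection $P$. It then remains to prove $\psi\equiv 0$: indeed, once $\psi=0$ we have $\phi(A)=\phi(I)A=A\phi(I)$, and the centralizer identities $\phi(AB)=\phi(A)B=A\phi(B)$ follow at once from the centrality of $\phi(I)$.

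The main obstacle is exactly this last step. On the set of finite linear combinations of projections $\psi$ vanishes by linearity, and such elements are norm dense in $\mathcal{A}$; but $\psi$ is a priori only linear, not continuous, so density is not enough (and in a diffuse abelian summand such as $L^\infty[0,1]$ a typical element is not even a finite combination of projections). To bridge this gap I would first extract a localization of $\psi$ from the zero-product hypothesis: taking the left and right support projections $L$ and $R$ of an element $A$, the relations $(I-L)A=0$ and $A(I-R)=0$ force $\psi(A)=L\psi(A)R$, and more generally $\psi$ is disjointness preserving in the sense that $AB=0$ implies $\psi(A)\psi(B)=0$. Combining this localization with the vanishing of $\psi$ on all spectral projections of a self-adjoint element, and treating the diffuse abelian part separately via the fact that a support-decreasing linear map on $L^\infty$ is multiplication by a fixed function, I would conclude $\psi=0$. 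The delicate, care-demanding part is precisely this automatic-continuity-type passage from projections to arbitrary self-adjoint elements; the corner computation giving $\phi(P)=\phi(I)P=P\phi(I)$ and the centrality of $\phi(I)$ are the elementary, routine ingredients.
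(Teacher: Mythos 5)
Your opening computation coincides exactly with the paper's: from the four zero products $P(I-P)=(I-P)P=0$ you get $\phi(P)=\phi(I)P=P\phi(I)$ for every projection $P$, and hence $\phi(I)$ is central. Up to that point the argument is correct and complete. The problem is the remainder, which you yourself flag as the delicate part: it is precisely here that the paper invokes nontrivial automatic-continuity theorems, namely \cite[Proposition 2.4]{515} and \cite[Corollary 1.2]{516}, to conclude that a centralizable-at-zero map on a von Neumann algebra is norm continuous; continuity plus the norm density of the linear span of projections then finishes the proof in one line. You correctly observe that density alone is useless for a merely linear map, but your proposed bridge --- ``a support-decreasing linear map on $L^\infty$ is multiplication by a fixed function'' --- is exactly the statement whose truth is in doubt when no continuity hypothesis is imposed. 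The standard theorems of this type (Lamperti/Zaanen-style descriptions of disjointness-preserving operators, the Abramovich--Kitover results) all assume boundedness, order boundedness, or bijectivity; for a bare linear map the claim is essentially equivalent to the automatic continuity you are trying to circumvent, so invoking it as a known fact makes the argument circular, and no proof of it is offered.

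There is also a secondary gap: even granting the $L^\infty$ claim, the reduction of a general von Neumann algebra to ``the diffuse abelian part'' is not spelled out. For self-adjoint $A$, the restriction of $\psi$ to the abelian algebra $W^*(A)$ takes values in all of $\mathcal{A}$, not in $W^*(A)$, so the abelian statement does not apply as stated; one would need a relative (module) version, which runs into the same continuity issue. In summary: your proof matches the paper's through the projection computation and the reduction to $\psi=\phi-\phi(I)(\cdot)$, but the essential second half is an unproved (and, absent continuity, dubious) claim, whereas the paper fills this hole by citing known automatic-continuity results.
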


\begin{proof}
For any projection $P$ in $\mathcal{A}$, since $P(I-P)=(I-P)P=0$, we have
$$\phi (P)(I-P)=P\phi(I-P)=\phi(I-P)P=(I-P)\phi(P)=0.$$ It follows that $\phi(P)=\phi(I)P=P\phi(I)$. By \cite[Proposition 2.4]{515} and \cite[Corollary 1.2]{516},
we know that $\phi$ is continuous. Since
$\mathcal{A}=\overline{span\{P\in\mathcal{A}:P=P^*=P^2\}}$, it follows that $\phi(A)=\phi(I)A=A\phi(I)$ for every $A\in\mathcal{A}$. Hence $G$ is a full-centralizable point.
\end{proof}

\begin{lemma}
Let $\mathcal{A}$ be a von Neumann algebra acting on a Hilbert space $\mathcal H$ and $P$ be the range projection of $G$ .
If $\mathcal{C}(P)=\mathcal{C}(I-P)=I$, then $G$ is a full-centralizable point.

\end{lemma}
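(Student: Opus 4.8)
The plan is to use $P$ to induce a Peirce decomposition of $\mathcal A$ and to exploit the two central-carrier hypotheses as separating conditions. Writing $P_1=P$, $P_2=I-P$, the defining property of the range projection gives $PG=G$ and $(I-P)G=0$, so $G$ lies in the ``first row'' $P\mathcal A$. The key analytic input is that $\mathcal C(P)=I$ forces $P\mathcal AT=0\Rightarrow T=0$ and $T\mathcal AP=0\Rightarrow T=0$: indeed $P\mathcal AT=0$ means $P$ is orthogonal to $\overline{\mathcal AT\mathcal H}=\mathcal C(L(T))\mathcal H$, where $L(T)$ is the range projection of $T$, so $\mathcal C(P)\,\mathcal C(L(T))=0$, and $\mathcal C(P)=I$ collapses $T$ to $0$. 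The hypothesis $\mathcal C(I-P)=I$ gives the companion separating statements for $I-P$. I would also record at the outset that $\phi$ is continuous, exactly as invoked in the proof of Lemma 2.4.

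First I would extract the first-order identities from the factorizations $I\cdot G=G=G\cdot I$ and $P\cdot G=G$, namely $\phi(G)=\phi(I)G=G\phi(I)$ and $P\phi(G)=\phi(G)=\phi(P)G$. The main engine is then a one-parameter family of invertible factorizations: for invertible $S$ one has $S\,(S^{-1}G)=G$, so $\phi(S)S^{-1}G=\phi(G)=\phi(I)G$ and hence $(\phi(S)-\phi(I)S)S^{-1}P=0$. Substituting $S=I+tK$ and comparing the coefficient of $t$ yields the crucial column identity $(\phi(K)-\phi(I)K)P=0$ for every $K$. The conjugate equation $S\phi(S^{-1}G)=\phi(G)$ gives $\phi(RG)=R\phi(G)$ for all $R$, and the right-hand family $(GR)R^{-1}=G$ gives $\phi(GR)=\phi(G)R$.

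Next I would pin down how $\phi$ respects the Peirce structure, using the non-invertible factorization $A=P$, $B=G+(I-P)Y$, which satisfies $AB=G$ for every $Y$ precisely because $(I-P)G=0$. The equation $\phi(P)B=\phi(G)$ forces $\phi(P)(I-P)=0$, and $P\phi(B)=\phi(G)$ forces $P\phi((I-P)\mathcal A)=0$. Together with the perturbation $(P+N)G=G$ for $N\in\mathcal A(I-P)$ and the column identity, these give $\phi(P)=\phi(I)P$, $\phi((I-P)\mathcal A)\subseteq(I-P)\mathcal A$, and $\phi(\mathcal A(I-P))\subseteq\mathcal A(I-P)$; in particular $P\phi(I)(I-P)=0$.

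The remaining task is to upgrade the column identity to the full identity $\phi(K)=\phi(I)K$, and symmetrically $\phi(K)=K\phi(I)$; the two together give $\phi(I)K=K\phi(I)$ for all $K$, so $\phi(I)$ is central and $\phi$ is a centralizer. The hard part is exactly this off-diagonal step: factorizations of $G$ with an invertible factor only ever see the $P$-column of $\phi$, since $\overline{\mathrm{ran}\,G}=P\mathcal H$, so the remaining error $D(K)=\phi(K)-\phi(I)K\in\mathcal A(I-P)$ is invisible to them. I expect to annihilate $D$ by producing the relation $D(K)\mathcal AP=0$ and then invoking $\mathcal C(P)=I$ to conclude $D(K)=0$; establishing that relation requires combining the structure-preservation above with the symmetric support-side identities (obtained by passing to $G^{*}$ via Lemma 2.3), and it is here that $\mathcal C(I-P)=I$ is indispensable, as it guarantees enough room in the off-diagonal corners for the separating argument to bite. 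This separating step is the crux of the proof.
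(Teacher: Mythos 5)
Your setup is sound and matches the paper's in spirit: the same Peirce decomposition relative to $P$, the same translation of $\mathcal{C}(P)=\mathcal{C}(I-P)=I$ into separating statements, and your preliminary identities (the column identity $(\phi(K)-\phi(I)K)P=0$, $\phi(P)=\phi(I)P$, and the invariance $\phi(\mathcal{A}(I-P))\subseteq\mathcal{A}(I-P)$, $\phi((I-P)\mathcal{A})\subseteq(I-P)\mathcal{A}$) are all correctly derived and appear, in one form or another, in the paper's Claims 1--2. But the proof stops exactly where the real work begins, and you say so yourself: you never produce the relation $D(K)\mathcal{A}P=0$ that would let $\mathcal{C}(P)=I$ kill the error term $D(K)=\phi(K)-\phi(I)K$. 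This is a genuine gap, not a routine verification. The reason your toolkit cannot produce it is structural: every factorization you use ($S\cdot S^{-1}G$, $P\cdot(G+(I-P)Y)$, $(P+N)\cdot G$, $GR\cdot R^{-1}$) has one factor supported in the $P$-column or is a perturbation annihilated by $G$, so the resulting identities only ever constrain $\phi(\cdot)P$ and the coarse invariance of the corners; nothing couples the values of $\phi$ on $\mathcal{A}_{21}$, $\mathcal{A}_{22}$, or the $\mathcal{A}_{22}$-component of $\phi(\mathcal{A}_{12})$, to products against $\mathcal{A}_{12}$. The paper's decisive device is the two-parameter family of factorizations
$$(A_{11}+tA_{11}A_{12})(A_{11}^{-1}G-A_{12}A_{22}+t^{-1}A_{22})=G, \qquad (A_{11}+tA_{11}A_{12})(A_{11}^{-1}G-A_{12}A_{21}+t^{-1}A_{21})=G,$$
with $A_{11}$ invertible in $\mathcal{A}_{11}$; comparing coefficients of $t^{-1}$, $t^{0}$, $t$ in both $\phi(S)T=\phi(G)$ and $S\phi(T)=\phi(G)$ yields $\phi(A_{11})A_{22}=0$, $P_1\phi(A_{22})=0$, and the multiplicative identities $\phi(A_{11}A_{12})=\phi(A_{11})A_{12}$, $\phi(A_{12}A_{22})=A_{12}\phi(A_{22})$, $\phi(A_{12}A_{21})=A_{12}\phi(A_{21})$. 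It is precisely expressions of the form $A_{12}\phi(\,\cdot\,)$, with $A_{12}$ ranging over all of $\mathcal{A}_{12}$, that the separating property extracted from the central-carrier hypotheses can cancel (e.g. $A_{12}\phi(A_{21})P_2=\phi(A_{12}A_{21})P_2=0$ for all $A_{12}$ forces $P_2\phi(A_{21})P_2=0$). Some analogue of these factorizations, in which $A_{21}$ and $A_{22}$ enter a factor of $G$ nontrivially, is indispensable, and your proposal contains none.

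Your suggested fallback---obtaining ``symmetric support-side identities by passing to $G^{*}$ via Lemma 2.3''---does not repair this. The map $\widetilde{\phi}(A)=(\phi(A^{*}))^{*}$ is centralizable at $G^{*}$, but the range projection of $G^{*}$ is the support projection $Q$ of $G$, not $P$, and the hypotheses of this lemma are not adjoint-symmetric: $\mathcal{C}(I-Q)=I$ can fail even when $\mathcal{C}(P)=\mathcal{C}(I-P)=I$ (take $G$ a proper isometry in $B(\mathcal{H})$, so $Q=I$). So the adjoint trick only gives row identities relative to $Q$, i.e. $Q\phi(K)=QK\phi(I)$, and these do not combine with your $P$-column identities to reach $D(K)=0$; the paper's argument works entirely with $P$ and never needs $Q$. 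A minor further point: the continuity of $\phi$ is invoked in the paper only in Lemma 2.5 (the $G=0$ case), not Lemma 2.4, and it is neither needed nor obviously available in the present lemma---the paper's proof here is purely algebraic.
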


\begin{proof}
Set $P_1=P,~ P_2=I-P$, and denote $P_i\mathcal{A}P_j$ by $\mathcal{A}_{ij}$, $i,j=1,2$. For every $A$ in $\mathcal{A}$, denote $P_iAP_j$ by $A_{ij}$.

Firstly, we claim that the condition $A\mathcal{A}_{ij}=0$ implies $AP_i=0$, and similarly, $\mathcal{A}_{ij}A=0$ implies $P_jA=0$.
Indeed, since $\mathcal{C}(P_j)=I$, by \cite[Proposition 5.5.2]{514}, the range of $\mathcal{A}P_j$ is dense in $\mathcal H$. So $AP_i\mathcal{A}P_j=0$ implies $AP_i=0$. On the other hand, if $\mathcal{A}_{ij}A=0$, then $A^{*}\mathcal{A}_{ji}=0$. Hence $A^{*}P_j=0$ and $P_jA=0$.

Besides, since $P_1=P$ is the range projection of $G$, we have $P_1G=G$. Moreover, if $AG=0,$ then $AP_1=0$.

In the following, we assume that $A_{ij}$ is an arbitrary element in $\mathcal{A}_{ij}$, $i,j=1,2$, and $t$ is an arbitrary nonzero element in $\mathbb{C}$. Without loss of generality, we may assume that $A_{11}$ is invertible in $\mathcal{A}_{11}$.

\textbf{Claim 1} $\phi(\mathcal{A}_{12})\subseteq\mathcal{A}_{12}$.

Since $(P_1+tA_{12})G=G$, we have $\phi(G)=\phi(P_1+tA_{12})G$. It implies that $\phi(A_{12})G=0$. Hence
$\phi(A_{12})P_1=0.$

By $(P_1+tA_{12})G=G$, we also have $\phi(G)=(P_1+tA_{12})\phi(G)$. It follows that $A_{12}\phi(G)=A_{12}\phi(P_1)G=0$. So $A_{12}\phi(P_1)P_1=0$. Hence $P_2\phi(P_1)P_1=0.$

Since
$(A_{11}+tA_{11}A_{12})(A_{11}^{-1}G-A_{12}A_{22}+t^{-1}A_{22})=G,$
we have
\begin{align}
\phi(A_{11}+tA_{11}A_{12})(A_{11}^{-1}G-A_{12}A_{22}+t^{-1}A_{22})=\phi(G).\label{203}
\end{align}
Since $t$ is arbitrarily chosen in \eqref{203}, we obtain $$\phi(A_{11})(A_{11}^{-1}G-A_{12}A_{22})+\phi(A_{11}A_{12})A_{22}=\phi(G).$$
Since $A_{12}$ is also arbitrarily chosen, we can obtain $\phi(A_{11})A_{12}A_{22}=\phi(A_{11}A_{12})A_{22}.$  Taking $A_{22}=P_2$, since $\phi(A_{12})P_1=0$, we have
\begin{align}
\phi(A_{11}A_{12})=\phi(A_{11})A_{12}.\label{205}
\end{align}
Taking $A_{11}=P_1$, since $P_2\phi(P_1)P_1=0,$ we have
\begin{align}
P_2\phi(A_{12})=P_2\phi(P_1)A_{12}=0.\label{207}
\end{align}
So
$$\phi(A_{12})=\phi(A_{12})P_1+P_1\phi(A_{12})P_2+P_2\phi(A_{12})P_2
=P_1\phi(A_{12})P_2\subseteq\mathcal{A}_{12}. $$

\textbf{Claim 2} $\phi(\mathcal{A}_{11})\subseteq\mathcal{A}_{11}$.

Considering the coefficient of $t^{-1}$ in \eqref{203}, we have $\phi(A_{11})A_{22}=0.$ Thus
$\phi(A_{11})P_2=0.$
By \eqref{205}, we obtain $P_2\phi(A_{11})A_{12}=P_2\phi(A_{11}A_{12})=0.$ It follows that
$P_2\phi(A_{11})P_1=0.$
Therefore, $\phi(A_{11})=P_1\phi(A_{11})P_1\subseteq\mathcal{A}_{11}. $

\textbf{Claim 3} $\phi(\mathcal{A}_{22})\subseteq\mathcal{A}_{22}$.

By
$(A_{11}+tA_{11}A_{12})(A_{11}^{-1}G-A_{12}A_{22}+t^{-1}A_{22})=G,$
we also have
$$(A_{11}+tA_{11}A_{12})\phi(A_{11}^{-1}G-A_{12}A_{22}+t^{-1}A_{22})=\phi(G).$$
Through a similar discussion to equation \eqref{203}, we can prove
$P_1\phi(A_{22})=0$
and
\begin{align}
\phi(A_{12}A_{22})=A_{12}\phi(A_{22}).\label{211}
\end{align}
Thus $A_{12}\phi(A_{22})P_1=\phi(A_{12}A_{22})P_1=0$.
It follows that
$P_2\phi(A_{22})P_1=0.$
Therefore, $\phi(A_{22})=P_2\phi(A_{22})P_2\subseteq\mathcal{A}_{22}. $

\textbf{Claim 4} $\phi(\mathcal{A}_{21})\subseteq\mathcal{A}_{21}$.

Since
$(A_{11}+tA_{11}A_{12})(A_{11}^{-1}G-A_{12}A_{21}+t^{-1}A_{21})=G,$
we have
$$(A_{11}+tA_{11}A_{12})\phi(A_{11}^{-1}G-A_{12}A_{21}+t^{-1}A_{21})=\phi(G).$$
According to this equation, we can similarly obtain that
$P_1\phi(A_{21})=0$
and
\begin{align}
A_{12}\phi(A_{21})=\phi(A_{12}A_{21}).\label{219}
\end{align}
Hence $A_{12}\phi(A_{21})P_2=\phi(A_{12}A_{21})P_2=0$.
It follows that $P_2\phi(A_{21})P_2=0$. Therefore, $\phi(\mathcal{A}_{21})=P_2\phi(A_{21})P_1\subseteq\mathcal{A}_{21}$.

\textbf{Claim 5} $\phi(A_{ij})=\phi(P_i)A_{ij}=A_{ij}\phi(P_j)$ for each $i,j\in \{1,2\}$.

By taking $A_{11}=P_1$ in \eqref{205}, we have $\phi(A_{12})=\phi(P_1)A_{12}$. By taking $A_{22}=P_2$ in \eqref{211}, we have $\phi(A_{12})=A_{12}\phi(P_2)$.

By \eqref{205}, we have $\phi(A_{11})A_{12}=\phi(A_{11}A_{12})=\phi(P_1)A_{11}A_{12}$. It follows that
$\phi(A_{11})=\phi(P_1)A_{11}.$ On the other hand, $\phi(A_{11})A_{12}=\phi(A_{11}A_{12})=A_{11}A_{12}\phi(P_2)=A_{11}\phi(A_{12})=A_{11}\phi(P_1)A_{12}$.
It follows that $\phi(A_{11})=A_{11}\phi(P_1).$

By \eqref{211} and \eqref{219}, through a similar discussion as above, we can obtain that
$\phi(A_{22})=A_{22}\phi(P_2)=\phi(P_2)A_{22}$
and
$\phi(A_{21})=A_{21}\phi(P_1)=\phi(P_2)A_{21}.$

Now we have proved that
$\phi(\mathcal{A}_{ij})\subseteq\mathcal{A}_{ij}$
and $\phi(A_{ij})=\phi(P_i)A_{ij}=A_{ij}\phi(P_j).$
It follows that
\begin{align*}
\phi(A)&=\phi(A_{11}+A_{12}+A_{21}+A_{22})\notag\\
&=\phi(P_1)(A_{11}+A_{12}+A_{21}+A_{22})+\phi(P_2)(A_{11}+A_{12}+A_{21}+A_{22})\notag\\
&=\phi(P_1+P_2)(A_{11}+A_{12}+A_{21}+A_{22})\notag\\
&=\phi(I)A.\notag
\end{align*}
Similarly, we can prove that $\phi(A)=A\phi(I)$. Hence $G$ is a full-centralizable point.
\end{proof}

\begin{proof}[\bf Proof of Theorem 2.1]

Suppose the range projection of $G$ is $P$.
Set $Q_1=I-\mathcal{C}(I-P)$, $Q_2=I-\mathcal{C}(P)$, and $Q_3=I-Q_1-Q_2$. Since $Q_1\leq P$ and $Q_2\leq I-P$, $\{Q_i\}_{i=1,2,3}$ are mutually orthogonal central projections.
Therefore $\mathcal{A}=\sum\limits_{i=1}^{3}\bigoplus\mathcal{A}_i=\sum\limits_{i=1}^{3}\bigoplus(Q_i\mathcal{A})$. Obviously, $\mathcal{A}_i$ is also a von Neumann algebra acting on $Q_i\mathcal H$. For each element $A$ in $\mathcal{A}$, we write
$A=\sum\limits_{i=1}^{3}A_i=\sum\limits_{i=1}^{3}Q_iA$.

We divide our proof into two cases.\\
\textbf{Case 1} $ker(G)=\{0\}$

Since $Q_1\leq P$, we have $\overline{ran G_1}=\overline{ran Q_1G}=Q_1\mathcal H$. Since $G$ is injective on $\mathcal H$, $G_1=Q_1G$ is also injective on $Q_1\mathcal H$. Hence $G_1$ is a separating point(both right and left) in $\mathcal{A}_1$. By Lemma 2.4, $G_1$ is a full-centralizable point in $\mathcal{A}_1$.

Since $Q_2\leq I-P$, we have $G_2=Q_2G=0$. By Lemma 2.5, $G_2$ is a full-centralizable point in $\mathcal{A}_2$.

Note that $\overline{ran G_3}=\overline{ran Q_3G}=Q_3P=P_3$. Denote the central carrier of $P_3$ in $\mathcal{A}_3$ by $\mathcal{C}_{\mathcal{A}_3}(P_3)$. We have
$Q_3-\mathcal{C}_{\mathcal{A}_3}(P_3)\leq Q_3-P_3=Q_3(I-P)\leq I-P$. Obviously, $Q_3-\mathcal{C}_{\mathcal{A}_3}(P_3)$ is a central projection orthogonal to $Q_2$,
so $Q_3-\mathcal{C}_{\mathcal{A}_3}(P_3)+I-\mathcal{C}(P)\leq I-P$. That is $Q_3-\mathcal{C}_{\mathcal{A}_3}(P_3)+P\leq \mathcal{C}(P)$. It implies that $Q_3-\mathcal{C}_{\mathcal{A}_3}(P_3)=0$, i.e. $\mathcal{C}_{\mathcal{A}_3}(P_3)=Q_3$. Similarly, we can prove $\mathcal{C}_{\mathcal{A}_3}(Q_3-P_3)=Q_3$. By Lemma 2.6, $G_3$ is a full-centralizable point in $\mathcal{A}_3$.

By Lemma 2.2, $G$ is a full-centralizable point.\\
\textbf{Case 2} $ker(G)\neq \{0\}$

In this case, $G_2$ and $G_3$ are still full-centralizable points. Since $\overline{ran G_1}=Q_1H$ , we have $ker(G_1^{*})=\{0\}$. By Case 1, $G_1^{*}$ is a full-centralizable point in $\mathcal{A}_1$. By Lemma 2.3, $G_1$ is also a full-centralizable point in $\mathcal{A}_1$.

By Lemma 2.2, $G$ is a full-centralizable point.
\end{proof}

\section{Centralizers on triangular algebras}\

In this section, we characterize the full-centralizable points on triangular algebras. The following theorem is our main result.

\begin{theorem}
Let $\mathcal{J}=\left[
                   \begin{array}{cc}
                     \mathcal{A} & \mathcal{M} \\
                     0 & \mathcal{B} \\
                   \end{array}
                 \right]
$
be a triangular algebra, where $\mathcal{A}$ and $\mathcal{B}$ are two unital Banach algebras.
Then every $G$ in $\mathcal J$ is a full-centralizable point.
\end{theorem}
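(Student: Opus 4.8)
The plan is to carry out a Peirce-decomposition argument in the spirit of the proof of Lemma 2.6, with the faithfulness of $\mathcal M$ playing the role that the central-carrier hypotheses played there. Let $P_1$ and $P_2$ be the two diagonal idempotents of $\mathcal J$, so that $P_1+P_2=I$, $P_1\mathcal J P_1\cong\mathcal A$, $P_1\mathcal J P_2\cong\mathcal M$, $P_2\mathcal J P_2\cong\mathcal B$, and $P_2\mathcal J P_1=0$; write $X_{ij}=P_iXP_j$ and $G=G_{11}+G_{12}+G_{22}$. Since $\mathcal J_{21}=0$, every value $\phi(X)$ automatically has vanishing $(2,1)$-component, so the task reduces to controlling the three surviving components of $\phi$ on each corner and then showing
\begin{align*}
\phi(X_{ij})=\phi(P_i)X_{ij}=X_{ij}\phi(P_j),\qquad i\le j,
\end{align*}
whence summing over the three corners gives $\phi(X)=\phi(I)X=X\phi(I)$ for every $X$. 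Because $\mathcal J$ is a unital Banach algebra, each corner element is a sum of two elements invertible in the corresponding corner, so it suffices to establish the corner identities for invertible corner entries and then extend by linearity, exactly as in the proof of Lemma 2.2.

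The identities will be extracted from two families of factorizations of $G$. The first is the unipotent family: for $M\in\mathcal M$ and a scalar $t$ one checks the clean identity
\begin{align*}
(I+tM)\,(G-tMG_{22})=G,
\end{align*}
valid because $MM=0$ and $MG=MG_{22}$; feeding this into $\phi(G)=\phi(I+tM)(G-tMG_{22})$ and $\phi(G)=(I+tM)\phi(G-tMG_{22})$ and comparing coefficients of $t$ and $t^2$ yields relations tying $\phi|_{\mathcal M}$ to $\phi(I)$. These, together with the structural products $MP_1=0$, $P_2M=0$ and the faithfulness of $\mathcal M$ as a left $\mathcal A$- and right $\mathcal B$-module, should give $\phi(\mathcal M)\subseteq\mathcal M$ and $\phi(M)=\phi(P_1)M=M\phi(P_2)$. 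The second family uses the diagonal invertibles $A_{11}+P_2$ and $P_1+B_{22}$ (with $A_{11},B_{22}$ invertible in their corners) and their compensated factorizations $X\cdot X^{-1}G=G=GX^{-1}\cdot X$; combined with the $\mathcal M$-information already obtained, these pin down $\phi$ on $\mathcal A$ and on $\mathcal B$, giving $\phi(\mathcal A)\subseteq\mathcal A$, $\phi(\mathcal B)\subseteq\mathcal B$ and the corner identities above.

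The main obstacle is that $G$ is completely arbitrary, so its corners $G_{11}$ and $G_{22}$ need not be separating points and $G_{12}$ is an unrestricted module element; consequently the direct cancellation that Lemma 2.4 provides is unavailable, and the clean behaviour exploited in Lemma 2.6 (where $\mathcal C(P)=\mathcal C(I-P)=I$ forced both corners to act like separating points) does not apply. Each factorization of $G$ only determines $\phi$ modulo the right, or left, annihilator of $G$, so the real work is to show that the unipotent family --- whose validity does not depend on any nondegeneracy of $G$ --- carries enough information, when combined with the faithfulness of $\mathcal M$, to remove this ambiguity on every Peirce component. I expect the most delicate point to be the degenerate corners (in the extreme, $G=0$): there the diagonal factorizations contribute nothing and all of the needed identities must be squeezed out of the module relations and faithfulness alone, so the faithfulness hypotheses on $\mathcal M$ are exactly what make the argument close.
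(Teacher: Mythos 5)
Your overall frame (Peirce decomposition with respect to the diagonal idempotents, corner identities $\phi(X_{ij})=\phi(P_i)X_{ij}=X_{ij}\phi(P_j)$, then summation) is consistent with what the paper does, but the engine of your argument --- the factorizations you feed into the hypothesis --- is too weak, and the obstacle you flag at the end is not a delicate point but the entire difficulty, left unresolved. All three of your families, $X\cdot X^{-1}G=G$, $GX^{-1}\cdot X=G$, and $(I+tM)(G-tMG_{22})=G$, keep $G$ intact as a block inside one of the two factors. Consequently every identity you can extract has the form $[\cdots]G=0$, $G[\cdots]=0$, $\phi(M)MG_{22}=0$, and so on: it constrains $\phi$ only modulo the left or right annihilator of $G$ (or of its pieces), exactly the ambiguity you admit. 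Faithfulness of $\mathcal M$ cannot remove it, because faithfulness only cancels a module element that is multiplied by an arbitrary element of $\mathcal A$ or $\mathcal B$, and your relations never produce such a configuration when $G$ is degenerate. In the extreme case $G=0$ your unipotent family literally reads $(I+tM)\cdot 0=0$ and every relation you list becomes $0=0$; the diagonal families are equally vacuous. Yet the hypothesis at $G=0$ (that $\phi(A)B=A\phi(B)=0$ whenever $AB=0$) still has plenty of content --- your chosen factorizations simply fail to access any of it. Even mildly degenerate cases break the scheme: if $G_{22}=0$ the unipotent family collapses to $\phi(M)G=0$ and $M\phi(G)=0$.

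The idea you are missing is how to make factorizations of $G$ informative \emph{uniformly} in $G$: the paper distributes the entries $A$, $M$, $B$ of $G$ across \emph{both} factors, together with free parameters, using compensation in the $(1,2)$ corner, e.g.
\begin{align*}
\left[\begin{array}{cc} X & M \\ 0 & B \end{array}\right]
\left[\begin{array}{cc} X^{-1}A & 0 \\ 0 & I_2 \end{array}\right]=G,
\qquad
\left[\begin{array}{cc} X & M-XY \\ 0 & B \end{array}\right]
\left[\begin{array}{cc} X^{-1}A & Y \\ 0 & I_2 \end{array}\right]=G,
\end{align*}
for every invertible $X\in\mathcal A$ and every $Y\in\mathcal M$. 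Note that neither factor is a left or right multiple of $G$ (the $(2,2)$ entry $I_2$ rules that out when, say, $B=0$), which is precisely why these escape the annihilator ambiguity. Writing $\phi$ in nine components, the first factorization gives $f_{12}(X)=f_{12}(A)$ for \emph{all} invertible $X$; replacing $X$ by $\lambda X$ and using linearity forces $f_{12}\equiv 0$ --- this constancy-plus-homogeneity step is another ingredient absent from your plan. The second gives $f_{11}(X)Y=g_{12}(XY)$, an identity in two free parameters in which $G$ no longer appears at all; only here does faithfulness enter, converting $[f_{11}(X)-f_{11}(I_1)X]Y=0$ into $f_{11}(X)=f_{11}(I_1)X$. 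Because the parameters, not $G$, carry the information, these identities retain full strength when $A$, $M$, $B$, or all of them, vanish. To salvage your write-up you would have to replace your three families by compensated factorizations of this kind; as proposed, the degenerate cases remain out of reach.
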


\begin{proof}
Let $\phi: \mathcal{J}\rightarrow\mathcal{J}$ be a centralizable mapping at $G$.

Since $\phi$ is linear, for every
$\left[
  \begin{array}{cc}
    X & Y \\
    0 & Z \\
  \end{array}
\right]$
in $\mathcal{J}$, we write

$$\phi\left[
       \begin{array}{cc}
X & Y \\
0 & Z  \\
       \end{array}
     \right]
=\left[
   \begin{array}{cc}
f_{11}(X)+g_{11}(Y)+h_{11}(Z) & f_{12}(X)+g_{12}(Y)+h_{12}(Z) \\
0 & f_{22}(X)+g_{22}(Y)+h_{22}(Z)  \\
   \end{array}
 \right],
$$
where $f_{11}:\mathcal{A}\rightarrow\mathcal{A},~f_{12}:\mathcal{A}\rightarrow\mathcal{M},~f_{22}:\mathcal{A}\rightarrow\mathcal{B},~
g_{11}:\mathcal{M}\rightarrow\mathcal{A},~g_{12}:\mathcal{M}\rightarrow\mathcal{M},$
$g_{22}:\mathcal{M}\rightarrow\mathcal{B},~
h_{11}:\mathcal{B}\rightarrow\mathcal{A},~h_{12}:\mathcal{B}\rightarrow\mathcal{M},~h_{22}:\mathcal{B}\rightarrow\mathcal{B},$
are all linear mappings.

In the following, we denote the units of $\mathcal{A}$ and $\mathcal{B}$ by $I_1$ and $I_2$, respectively. We write $G=\left[
                                \begin{array}{cc}
                                  A & M \\
                                  0 & B \\
                                \end{array}
                              \right]$
and
\begin{align}
 \phi\left[
       \begin{array}{cc}
         A & M \\
         0 & B  \\
       \end{array}
     \right]
=\left[
   \begin{array}{cc}
f_{11}(A)+g_{11}(M)+h_{11}(B) & f_{12}(A)+g_{12}(M)+h_{12}(B) \\
0 & f_{22}(A)+g_{22}(M)+h_{22}(B)  \\
   \end{array}
 \right].\label{301}
\end{align}

We divide our proof into several steps.

\textbf{Claim 1}
$f_{12}=f_{22}=0$.

Let $S=\left[
          \begin{array}{cc}
            X & M \\
            0 & B \\
          \end{array}
        \right]
$ and $T=\left[
       \begin{array}{cc}
         X^{-1}A & 0 \\
         0 & I_2 \\
       \end{array}
     \right]
$, where $X$ is an invertible element in $\mathcal{A}$.
Since $ST=G$, we have
\begin{align}
\phi(G)&=\phi(S)T \notag\\
&=\left[
\begin{array}{cc}
f_{11}(X)+g_{11}(M)+h_{11}(B) & f_{12}(X)+g_{12}(M)+h_{12}(B) \\
 0 & f_{22}(X)+g_{22}(M)+h_{22}(B) \\
 \end{array}
 \right]\left[
          \begin{array}{cc}
           X^{-1}A & 0 \\
         0 & I_2 \\
          \end{array}
        \right] \notag\\
&=\left[
\begin{array}{cc}
 * & f_{12}(X)+g_{12}(M)+h_{12}(B) \\
 0 & f_{22}(X)+g_{22}(M)+h_{22}(B) \\
 \end{array}
 \right].\label{302}
\end{align}
By comparing \eqref{301} with \eqref{302}, we obtain $f_{12}(X)=f_{12}(A)$ and $f_{22}(X)=f_{22}(A)$ for each invertible element $X$ in $\mathcal{A}$. Noting that $A$ is a fixed element, for any nonzero element $\lambda$ in $\mathbb{C}$, we have $f_{12}(\lambda X)=f_{12}(A)=\lambda f_{12}(X)=\lambda f_{12}(A)$. It follows that $f_{12}(X)=0$ for each invertible element $X$. Thus $f_{12}(X)=0$ for all $X$ in $\mathcal{A}$. Similarly, we can obtain $f_{22}(X)=0$.

\textbf{Claim 2}
$h_{12}=h_{11}=0$.

Let $S=\left[
          \begin{array}{cc}
            I_1 & 0 \\
            0 & BZ^{-1} \\
          \end{array}
        \right]
$ and $T=\left[
       \begin{array}{cc}
         A & M \\
         0 & Z \\
       \end{array}
     \right]
$, where $Z$ is an invertible element in $\mathcal{B}$.
Since $ST=G$, we have
\begin{align}
\phi(G)&=S\phi(T) \notag\\
&=\left[
          \begin{array}{cc}
            I_1 & 0 \\
            0 & BZ^{-1} \\
          \end{array}
        \right]
\left[
\begin{array}{cc}
f_{11}(A)+g_{11}(M)+h_{11}(Z) & f_{12}(A)+g_{12}(M)+h_{12}(Z) \\
 0 & f_{22}(A)+g_{22}(M)+h_{22}(Z) \\
 \end{array}
 \right]
 \notag\\
&=\left[
\begin{array}{cc}
 f_{11}(A)+g_{11}(M)+h_{11}(Z) & f_{12}(A)+g_{12}(M)+h_{12}(Z) \\
 0 & * \\
 \end{array}
 \right].\label{303}
\end{align}
By comparing \eqref{301} with \eqref{303}, we obtain $h_{12}(Z)=h_{12}(B)$ and $h_{11}(Z)=h_{11}(B)$ for each invertible element $Z$ in $\mathcal{B}$. Similarly as the previous discussion, we can obtain $h_{12}(Z)=h_{11}(Z)=0$ for all $Z$ in $\mathcal{B}$.

\textbf{Claim 3}
$g_{22}=g_{11}=0$.

For every $Y$ in $\mathcal{M}$, we set
 $S=\left[
          \begin{array}{cc}
            I_1 & M-Y \\
            0 & B \\
          \end{array}
        \right]
$, $T=\left[
       \begin{array}{cc}
         A & Y \\
         0 & I_2 \\
       \end{array}
     \right]
$. Obviously, $ST=G$. Thus we have
\begin{align}
\phi(G)&=\phi(S)T \notag\\
&=\left[
\begin{array}{cc}
* & * \\
0 & f_{22}(I_1)+g_{22}(M-Y)+h_{22}(B) \\
 \end{array}
 \right]
 \left[
          \begin{array}{cc}
           A & Y \\
         0 & I_2 \\
          \end{array}
        \right] \notag\\
&=\left[
\begin{array}{cc}
* & * \\
0 & f_{22}(I_1)+g_{22}(M-Y)+h_{22}(B) \\
 \end{array}
 \right].\label{304}
\end{align}
By comparing \eqref{301} with \eqref{304}, we obtain $f_{22}(I_1)+g_{22}(M-Y)+h_{22}(B)=f_{22}(A)+g_{22}(M)+h_{22}(B)$. Hence $g_{22}(Y)=f_{22}(I_1-A)$. It means $g_{22}(Y)=0$ immediately.

On the other hand,
\begin{align}
\phi(G)&=S\phi(T) \notag\\
&=\left[
          \begin{array}{cc}
            I_1 & M-Y \\
            0 & B \\
          \end{array}
        \right]
\left[
\begin{array}{cc}
f_{11}(A)+g_{11}(Y)+h_{11}(I_2) & * \\
 0 & * \\
 \end{array}
 \right]
 \notag\\
&=\left[
\begin{array}{cc}
 f_{11}(A)+g_{11}(Y)+h_{11}(I_2) & * \\
 0 & * \\
 \end{array}
 \right].\label{305}
\end{align}
By comparing \eqref{301} with \eqref{305}, we obtain $g_{11}(Y)=g_{11}(M)+h_{11}(B-I_2)$. Hence $g_{11}(Y)=0$.

According to the above three claims, we obtain that
\[
 \phi\begin{bmatrix}
X & Y \\
0 & Z  \\
\end{bmatrix}
=
\begin{bmatrix}
f_{11}(X) & g_{12}(Y) \\
0 & h_{22}(Z)  \\
\end{bmatrix}
\]
for every
$\left[
  \begin{array}{cc}
    X & Y \\
    0 & Z \\
  \end{array}
\right]$
in $\mathcal{J}$.

\textbf{Claim 4}
$f_{11}(X)=f_{11}(I_1)X$ for all $X$ in $\mathcal{A}$, and $g_{12}(Y)=f_{11}(I_1)Y$ for all $Y$ in $\mathcal{M}$.

Let $S=\left[
          \begin{array}{cc}
            X & M-XY \\
            0 & B \\
          \end{array}
        \right]
$ and $T=\left[
       \begin{array}{cc}
         X^{-1}A & Y \\
         0 & I_2 \\
       \end{array}
     \right]
$, where $X$ is an invertible element in $\mathcal{A}$, and $Y$ is an arbitrary element in $\mathcal{M}$.
Since $ST=G$, we have
\begin{align}
\phi(G)&=\phi(S)T \notag\\
&=\left[
          \begin{array}{cc}
            f_{11}(X) & g_{12}(M-XY) \\
            0 & h_{22}(B) \\
          \end{array}
        \right]
\left[
\begin{array}{cc}
         X^{-1}A & Y \\
         0 & I_2 \\
 \end{array}
 \right]
 \notag\\
&=\left[
\begin{array}{cc}
* & f_{11}(X)Y+g_{12}(M-XY) \\
 0 & * \\
 \end{array}
 \right] \notag\\
&=\left[
    \begin{array}{cc}
f_{11}(A) & g_{12}(M) \\
0 & h_{22}(B)  \\
    \end{array}
  \right].\label{306}
\end{align}
So we have $f_{11}(X)Y=g_{12}(XY)$. It follows that
\begin{align}
g_{12}(Y)=f_{11}(I_1)Y\label{307}
\end{align}
by taking $X=I_1$. Replacing $Y$ in \eqref{307} with $XY$, we can obtain $g_{12}(XY)=f_{11}(I_1)XY=f_{11}(X)Y$ for each invertible element $X$ in $\mathcal A$ and $Y$ in $\mathcal M$. Since $\mathcal{M}$ is faithful, we have
\begin{align}
f_{11}(X)=f_{11}(I_1)X\label{308}
\end{align}
for all invertible elements $X$ and so for all elements in $\mathcal{A}$.

\textbf{Claim 5}
$h_{22}(Z)=Zh_{22}(I_2)$ for all $Z$ in $\mathcal{B}$, and $g_{12}(Y)=Yh_{22}(I_2)$ for all $Y$ in $\mathcal{M}$.

Let $S=\left[
          \begin{array}{cc}
            I_1 & Y \\
            0 & BZ^{-1} \\
          \end{array}
        \right]
$ and $T=\left[
       \begin{array}{cc}
         A & M-YZ \\
         0 & Z \\
       \end{array}
     \right]
$, where $Z$ is an invertible element in $\mathcal{B}$, and $Y$ is an arbitrary element in $\mathcal{M}$.
Since $ST=G$, we have
\begin{align}
\phi(G)&=S\phi(T) \notag\\
&=\left[
          \begin{array}{cc}
            I_1 & Y \\
            0 & BZ^{-1} \\
          \end{array}
        \right]
\left[
\begin{array}{cc}
            f_{11}(A) & g_{12}(M-YZ) \\
            0 & h_{22}(Z) \\
 \end{array}
 \right]
 \notag\\
&=\left[
\begin{array}{cc}
* & g_{12}(M-YZ)+Yh_{22}(Z) \\
 0 & * \\
 \end{array}
 \right] \notag\\
&=\left[
    \begin{array}{cc}
f_{11}(A) & g_{12}(M) \\
0 & h_{22}(B)  \\
    \end{array}
  \right].\label{309}
\end{align}
So we have $g_{12}(YZ)=Yh_{22}(Z)$. Through a similar discussion as the proof of Claim 4, we obtain
$h_{22}(Z)=Zh_{22}(I_2)$
for all $Z$ in $\mathcal{B}$
and
$g_{12}(Y)=Yh_{22}(I_2)$
for all $Y$ in $\mathcal{M}$.

Thus we have that
\[
 \phi\begin{bmatrix}
X & Y \\
0 & Z  \\
\end{bmatrix}
=
\begin{bmatrix}
f_{11}(I_1)X & f_{11}(I_1)Y \\
0 & Zh_{22}(I_2)  \\
\end{bmatrix}
=
\begin{bmatrix}
f_{11}(I_1)X & Yh_{22}(I_2) \\
0 & Zh_{22}(I_2)  \\
\end{bmatrix}
\]
for every
$\left[
  \begin{array}{cc}
    X & Y \\
    0 & Z \\
  \end{array}
\right]$
in $\mathcal{J}$.
So it is sufficient to show that $f_{11}(I_1)X=Xf_{11}(I_1)$ for all $X$ in $\mathcal{A}$, and $h_{22}(I_2)Z=Zh_{22}(I_2)$ for all $Z$ in $\mathcal{B}$.
Since $f_{11}(I_1)Y=Yh_{22}(I_2)$ for all $Y$ in $\mathcal{M}$, we have $f_{11}(I_1)XY=XYh_{22}(I_2)=Xf_{11}(I_1)Y$. It implies that $f_{11}(I_1)X=Xf_{11}(I_1)$. Similarly, $h_{22}(I_2)Z=Zh_{22}(I_2)$. Now we can obtain that $\phi(J)=\phi(I)J=J\phi(I)$ for all $J$ in $\mathcal{J}$, where
$I=\left[
      \begin{array}{cc}
        I_1 & 0 \\
        0 & I_2 \\
      \end{array}
    \right]$
is the unit of $\mathcal{J}$.
Hence, $G$ is a full-centralizable point.
\end{proof}

As applications of Theorem 3.1, we have the following corollaries.
    \begin{corollary}
    Let $\mathcal{A}$ be a nest algebra on a Hilbert space $\mathcal H$. Then every element in $\mathcal{A}$ is a full-centralizable point.
    \end{corollary}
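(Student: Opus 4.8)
The plan is to realize a nest algebra as a triangular algebra and invoke Theorem 3.1. Recall that a nest $\mathcal{N}$ on $\mathcal{H}$ is a totally ordered family of closed subspaces (equivalently, projections) containing $\{0\}$ and $\mathcal{H}$ and closed under arbitrary intersections and closed spans, and the nest algebra $\mathcal{A}=\operatorname{Alg}\mathcal{N}$ consists of all bounded operators $T$ with $TN\subseteq N$ for every $N\in\mathcal{N}$. First I would dispose of the trivial nest $\mathcal{N}=\{0,\mathcal{H}\}$, for which $\mathcal{A}=B(\mathcal{H})$ and the conclusion is already known from \cite{519} (or follows from Theorem 2.1, since $B(\mathcal H)$ is a von Neumann algebra).

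Assuming the nest is nontrivial, I would select a nontrivial element $N$ of the nest, with associated projection $P$ (so $0\neq P\neq I$). The key structural observation is that $\mathcal{A}$ decomposes as a triangular algebra relative to this projection. Setting $\mathcal{A}_1=P\mathcal{A}P$, $\mathcal{B}_1=(I-P)\mathcal{A}(I-P)$, and $\mathcal{M}=P\mathcal{A}(I-P)$, the invariance condition $TP\mathcal H\subseteq P\mathcal H$ forces $(I-P)TP=0$ for every $T\in\mathcal{A}$, so that
\[
\mathcal{A}=\begin{bmatrix} P\mathcal{A}P & P\mathcal{A}(I-P) \\ 0 & (I-P)\mathcal{A}(I-P) \end{bmatrix}
=Tri(\mathcal{A}_1,\mathcal{M},\mathcal{B}_1).
\]
Here $\mathcal{A}_1$ and $\mathcal{B}_1$ are themselves unital Banach algebras (each is a nest algebra on the corresponding reducing piece, with units $P$ and $I-P$ respectively), and $\mathcal{M}$ is naturally an $(\mathcal{A}_1,\mathcal{B}_1)$-bimodule. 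The main point requiring care is verifying the faithfulness hypothesis of Theorem 3.1: I must check that $\mathcal{M}$ is faithful as a left $\mathcal{A}_1$-module and as a right $\mathcal{B}_1$-module, i.e. that $A_1\mathcal{M}=0$ implies $A_1=0$ for $A_1\in\mathcal{A}_1$, and dually on the right.

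The faithfulness is where the genuine work lies, and it is exactly the step I expect to be the main obstacle. The subtlety is that $\mathcal{M}=P\mathcal{A}(I-P)$ need not be all of $PB(\mathcal{H})(I-P)$; it consists only of those $P\mathcal{A}(I-P)$-corners arising from operators respecting the full nest, so one cannot simply quote the faithfulness of $B(\mathcal H)$-corners. The cleanest route is to use that $\mathcal{A}$ contains all rank-one operators $x\otimes y$ with $y\in N_-^{\perp}$ and $x\in N$ for suitable nest elements $N$; concretely, for any nonzero vectors $\xi\in P\mathcal H$ and $\eta\in(I-P)\mathcal H$, the rank-one operator $\xi\otimes\eta$ lies in $\mathcal{A}$ and hence in $\mathcal{M}=P\mathcal A(I-P)$. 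Given $A_1\in\mathcal{A}_1$ with $A_1\mathcal{M}=0$, applying this to $\xi\otimes\eta$ yields $(A_1\xi)\otimes\eta=0$ for all such $\xi,\eta$, forcing $A_1\xi=0$ for every $\xi\in P\mathcal H$, hence $A_1=0$; the right-faithfulness follows symmetrically. Once faithfulness is established, the nest algebra satisfies all the hypotheses of Theorem 3.1, and therefore every element $G\in\mathcal{A}$ is a full-centralizable point, completing the proof.
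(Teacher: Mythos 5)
Your proposal is correct and follows exactly the paper's route: dispose of the trivial nest via the $B(\mathcal H)$/von Neumann algebra case (Theorem 2.1), then realize a nontrivial nest algebra as a triangular algebra $Tri(P\mathcal{A}P,\,P\mathcal{A}(I-P),\,(I-P)\mathcal{A}(I-P))$ and invoke Theorem 3.1. The only difference is that the paper states the isomorphism to a triangular algebra as a known fact, whereas you verify the hypotheses explicitly (in particular the faithfulness of the corner bimodule via rank-one operators $\xi\otimes\eta$ with $\xi\in P\mathcal H$, $\eta\in(I-P)\mathcal H$), which is a sound and complete justification of that step.
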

\begin{proof}
If $\mathcal{A}=B(\mathcal H)$, then the result follows from Theorem 2.1. Otherwise, $\mathcal A$ is isomorphic to a triangular algebra. By Theorem 3.1, the result follows.
\end{proof}

    \begin{corollary}
    Let $\mathcal{A}$ be a CDCSL(completely distributive commutative subspace lattice) algebra on a Hilbert space $\mathcal H$. Then every element in $\mathcal{A}$ is a full-centralizable point.
    \end{corollary}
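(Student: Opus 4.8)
The plan is to derive the corollary from Theorem~3.1 and Theorem~2.1 by combining them with the rank-one structure theory of completely distributive lattices, rather than by repeating the parametrized-factorization computations of Theorem~3.1 from scratch. Write $\mathcal{A}=\operatorname{Alg}\mathcal{L}$ for the CDCSL algebra and let $\phi$ be a centralizable mapping at $G$; the goal is the centralizer identity $\phi(T)=\phi(I)T=T\phi(I)$ for all $T\in\mathcal{A}$. The decisive structural input is the rank-one density theorem for completely distributive lattices: because $\mathcal{L}$ is completely distributive, $\mathcal{A}$ is the $\sigma$-weak closure of the linear span $\mathcal{F}$ of the rank-one operators $x\otimes y$ it contains, where $x\otimes y\in\mathcal{A}$ exactly when $x\in E\mathcal H$ and $y$ is orthogonal to a suitable sub-projection of $E$ in the lattice, for some $E\in\mathcal{L}$. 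Thus it is enough to establish the centralizer identity on $\mathcal{F}$ and then extend it by continuity.

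First I would record an automatic-continuity step, showing that every centralizable mapping at $G$ on $\mathcal{A}$ is $\sigma$-weakly continuous, either by adapting the continuity results invoked in the proof of Lemma~2.5 (\cite{515,516}) or by exploiting that the centralizer relations force $\phi$ to respect the module action of the diagonal von Neumann algebra $\mathcal{D}=\mathcal{A}\cap\mathcal{A}^{*}$. Next, for each projection $E\in\mathcal{L}$ I would pass to the corner decomposition determined by $E$ and $E^{\perp}$. On the resulting rectangular pieces the equation $AB=G$ can be solved by the same ``invertible-element-plus-$t$'' substitutions used in Claims~1--5 of Theorem~3.1, since each such corner carries a genuine triangular sub-structure; this yields $\phi(R)=\phi(I)R=R\phi(I)$ for every rank-one $R=x\otimes y\in\mathcal{A}$ subordinate to $E$. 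Summing over $\mathcal{L}$, and using that these rank-ones span $\mathcal{F}$, gives the centralizer identity on the $\sigma$-weakly dense subalgebra $\mathcal{F}$. The degenerate positions of $G$ relative to $\mathcal{L}$ — where $G$ is rank-deficient or poorly aligned with the lattice — are handled exactly by the central-carrier case analysis of Theorem~2.1, splitting $\mathcal{A}$ into the part on which $G$ behaves as a separating element (Lemma~2.4) and the part on which $G$ effectively vanishes (Lemma~2.5).

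Finally I would close by density: the identity $\phi(T)=\phi(I)T=T\phi(I)$ holds on $\mathcal{F}$, and $\sigma$-weak continuity of $\phi$ together with the (separate) $\sigma$-weak continuity of left and right multiplication by $\phi(I)$ propagates it to all of $\mathcal{A}$, so $\phi$ is a centralizer and $G$ is a full-centralizable point. The main obstacle I anticipate is the bridge from the purely local hypothesis to global control: the centralizable condition only constrains pairs with $AB=G$, so when $G$ cannot be factored through an arbitrary rank-one one must first peel off the separating part of $\mathcal{A}$ from the vanishing part before the corner arguments apply, mirroring Cases~1 and~2 of Theorem~2.1. A secondary delicate point is justifying $\sigma$-weak continuity for a merely linear $\phi$, since the von Neumann-algebra continuity results do not transfer verbatim to a non-self-adjoint CDCSL algebra and may require a direct argument on the diagonal $\mathcal{D}$.
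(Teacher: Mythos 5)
Your route is genuinely different from the paper's, but it has gaps that I do not think can be repaired along the lines you sketch. The paper's proof is a two-line structural reduction: by the structure theory of CDCSL algebras (the results of Gilfeather--Moore and Lu cited as \cite{517} and \cite{518}), $\mathcal{A}$ is isomorphic to a direct sum $\sum_{i\in\Lambda}\bigoplus\mathcal{A}_i$ in which every summand is either $B(\mathcal{H}_i)$ or a triangular algebra $Tri(\mathcal{B},\mathcal{M},\mathcal{C})$ satisfying the faithfulness hypotheses of Theorem 3.1; each summand's elements are then full-centralizable points by Theorem 2.1 or Theorem 3.1, and Lemma 2.2 lifts this through the direct sum. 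You replace this with rank-one density plus a continuity-and-corners argument, and both halves of that replacement fail.

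First, the continuity step you need is simply not available. A centralizable mapping at $G$ is merely linear, and the automatic continuity results invoked in the proof of Lemma 2.5 (\cite{515}, \cite{516}) are theorems about C*-algebras and von Neumann algebras with no analogue for a non-self-adjoint CDCSL algebra; you concede this yourself, but without it your closing density step collapses, since a centralizer identity established only on the rank-one span $\mathcal{F}$ cannot be propagated $\sigma$-weakly. (Note that the paper's own triangular argument in Theorem 3.1 is purely algebraic precisely because no continuity theorem is available in that setting.) Second, the corner decomposition at an arbitrary $E\in\mathcal{L}$ need not produce a triangular algebra to which Theorem 3.1 applies: one needs $E\mathcal{A}E^{\perp}$ to be faithful as a left $E\mathcal{A}E$-module and as a right $E^{\perp}\mathcal{A}E^{\perp}$-module, and this fails in general --- for instance when the lattice splits as a direct sum at $E$, so that $E\mathcal{A}E^{\perp}=0$. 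Separating the genuinely triangular directions from the direct-sum directions is exactly what the Gilfeather--Moore structure theorem accomplishes, and your sketch does not replicate it. Finally, your fallback of handling the ``degenerate positions'' of $G$ by the central-carrier case analysis of Theorem 2.1 does not transfer: central carriers, range projections, and Lemmas 2.3--2.6 all rely on the adjoint operation and von Neumann algebra techniques, none of which are meaningful in a non-self-adjoint CDCSL algebra. The correct repair is the paper's own reduction: invoke the structure theorem and apply Lemma 2.2 together with Theorems 2.1 and 3.1.
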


\begin{proof}
It is known that $\mathcal{A}\cong \sum\limits_{i \in \Lambda}\bigoplus\mathcal{A}_i$, where each $\mathcal{A}_i$ is either $B(\mathcal H_i)$ for some Hilbert space $\mathcal H_i$ or a triangular algebra
$Tri(\mathcal{B},\mathcal{M},\mathcal{C})$ such that the conditions of Theorem 3.1 hold(see in \cite{517} and \cite{518}). By Lemma 2.2, the result follows.
\end{proof}

\textbf{Remark} For the definition of a CDCSL algebra, we refer to \cite{521}.
\section{Derivations on von Neumann algebras}\

In this section, we characterize the derivable mappings at a given point in a von Neumann algebra.

\begin{lemma}
Let $\mathcal{A}$ be a von Neumann algebra. Suppose $\Delta:\mathcal{A}\rightarrow\mathcal{A}$ is a linear mapping such that $\Delta(A)B+A\Delta(B)=0$ for each $A$ and $B$ in $\mathcal{A}$ with $AB=0$. Then $\Delta=D+\phi$, where $D:\mathcal{A}\rightarrow\mathcal{A}$ is a derivation, and $\phi:\mathcal{A}\rightarrow\mathcal{A}$ is a centralizer. In particular, $\Delta$ is bounded.
\end{lemma}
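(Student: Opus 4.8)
The plan is to isolate the ``central part'' of $\Delta$ as a centralizer and to show that the remainder is a derivation. First I would probe $\Delta$ on projections: for any projection $P$, the identities $P(I-P)=(I-P)P=0$ give $\Delta(P)(I-P)+P\Delta(I-P)=0$ and $\Delta(I-P)P+(I-P)\Delta(P)=0$; substituting $\Delta(I-P)=\Delta(I)-\Delta(P)$ and subtracting the two relations yields $P\Delta(I)=\Delta(I)P$. Thus $Z:=\Delta(I)$ commutes with every projection, and since a von Neumann algebra is the norm-closed linear span of its projections, $Z$ lies in the center. I would then set $\phi(A):=ZA=AZ$, which is a centralizer, and $D:=\Delta-\phi$. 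A direct check (using that $Z$ is central and $AB=0$) shows that $D$ is again derivable at $0$ and that $D(I)=0$. The problem is thereby reduced to proving that a linear map $D$ with $D(I)=0$ that is derivable at $0$ is a derivation.

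For this reduced problem I would aim at the Jordan identity $D(A^2)=D(A)A+AD(A)$ rather than the full Leibniz rule, since a Jordan derivation of a $C^*$-algebra is automatically a derivation (Sinclair, Bre\v{s}ar). The key computation is on a finite resolution of the identity: if $F_1,\dots,F_n$ are mutually orthogonal projections with $\sum_i F_i=I$, then $F_iF_j=0$ for $i\neq j$ gives $D(F_i)F_j+F_iD(F_j)=0$, while $F_i^2=F_i$ gives $D(F_i)=D(F_i)F_i+F_iD(F_i)$. For $A=\sum_i\lambda_i F_i$ with real $\lambda_i$ one has $A^2=\sum_i\lambda_i^2F_i$, and expanding $D(A)A+AD(A)=\sum_{i,j}\lambda_i\lambda_j\bigl(D(F_i)F_j+F_iD(F_j)\bigr)$ the off-diagonal terms vanish and the diagonal terms collapse to $\sum_i\lambda_i^2D(F_i)=D(A^2)$. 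Hence the Jordan identity holds for every self-adjoint element of finite spectrum.

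To pass from finite-spectrum elements to all of $\mathcal A$ I would use continuity: the map $A\mapsto D(A^2)-D(A)A-AD(A)$ is norm-continuous once $D$ is bounded, and finite-spectrum self-adjoint elements are norm-dense in the self-adjoint part by the spectral theorem, so the Jordan identity extends to all self-adjoint $A$ and then, by polarization and complex linearity, to all $A$. Invoking the theorem that a Jordan derivation of a $C^*$-algebra is a derivation gives that $D$ is a derivation, whence $\Delta=D+\phi$. Boundedness of $\Delta$ then follows because derivations of von Neumann algebras are bounded (indeed inner) and $\phi$ is visibly bounded.

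The main obstacle is precisely the continuity step. The relations coming from zero products only directly control $D$ on projections and on finite orthogonal families, and for a von Neumann algebra with a diffuse part (for instance an abelian summand $L^\infty$) the span of projections is not algebraically all of $\mathcal A$, so the extension genuinely requires automatic continuity of maps that are derivable at $0$. I would supply this by the same kind of automatic-continuity results used for centralizable maps in Lemma 2.5 (the analogous statements for derivable-at-$0$ maps on von Neumann algebras being available in the literature), after which the density argument above closes the proof. A direct Peirce-corner analysis relative to a single projection is also possible, but the same-corner products $\mathcal{A}_{ii}\cdot\mathcal{A}_{ii}$ are delicate and force one into central-carrier bookkeeping, which is why I prefer the Jordan route.
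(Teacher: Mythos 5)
Your reduction is correct as far as it goes: the computation showing $\Delta(I)$ commutes with every projection (hence is central), the passage to $D=\Delta-\phi$ with $D(I)=0$, and the verification of the Jordan identity $D(A^2)=D(A)A+AD(A)$ for self-adjoint elements of finite spectrum are all sound. (One small slip in exposition: $D(F_i)=D(F_i)F_i+F_iD(F_i)$ does not ``come from $F_i^2=F_i$'' --- derivability at $0$ says nothing about nonzero products --- but it does follow from applying the hypothesis to $F_i(I-F_i)=0$ together with $D(I)=0$, so that step is repairable.) The genuine gap is exactly the step you flag as the main obstacle and then wave off: the extension from finite-spectrum elements to all of $\mathcal{A}$ requires $D$ (equivalently $\Delta$) to be bounded, and your claim that automatic continuity of derivable-at-$0$ maps on von Neumann algebras ``is available in the literature'' is not correct in the generality you need. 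The known result of this type, Lee--Liu \cite{520}, assumes the algebra is \emph{prime}, and a von Neumann algebra is prime only when it is a factor. For a general von Neumann algebra --- in particular one with a diffuse abelian summand, which is precisely the case where your density argument cannot be replaced by algebra --- the boundedness of a map satisfying the hypothesis is what the paper's Remark following Lemma 4.1 presents as a \emph{new consequence} of the lemma, not an input to it. So, as written, your argument is circular (or rests on an unproven assumption) exactly in the cases that are not already covered by Lee--Liu.

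It is worth contrasting this with how the paper sidesteps the issue: instead of a global Jordan-derivation argument, it decomposes $\mathcal{A}$ as a direct sum of an abelian von Neumann algebra and algebras of the form $M_n(\mathcal{B})$, $n\ge 2$. On the abelian summand, $AB=0$ forces disjoint supports, so $\Delta(A)B+A\Delta(B)=0$ splits into $\Delta(A)B=A\Delta(B)=0$; thus $\Delta$ is \emph{centralizable} at $0$ there, and the continuity needed (Lemma 2.5) is for centralizer-type maps, where results in the literature (\cite{515}, \cite{516}) genuinely apply. On the matrix summands, the An--Li theorem \cite{513} is purely algebraic and needs no continuity at all. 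If you want to salvage your route, you must either confine the Jordan argument to summands in which every element is a finite linear combination of projections and treat the diffuse abelian part by the disjoint-support trick (which essentially reproduces the paper's proof), or supply an independent proof of automatic continuity --- and the latter is essentially as hard as the lemma itself.
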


\begin{proof}
\textbf{Case 1} $\mathcal{A}$ is an abelian von Neumann algebra. In this case, $\mathcal{A}\cong C(\mathcal{X})$ for some compact Hausdorff space $\mathcal{X}$. If $AB=0$, then the supports of $A$ and $B$ are disjoint. So the equation $\Delta(A)B+A\Delta(B)=0$ implies that $\Delta(A)B=A\Delta(B)=0$. By Lemma 2.5, $\Delta$ is a centralizer.

\textbf{Case 2} $\mathcal{A}\cong M_n(\mathcal{B})( n\geq2)$, where $\mathcal{B}$ is also a von Neumann algebra. By \cite[Theorem 2.3]{513}, $\Delta$ is a generalized derivation with $\Delta(I)$ in the center. That is to say, $\Delta$ is a sum of a derivation and a centralizer.

For general cases, we know $\mathcal{A}\cong\sum\limits_{i=1}^{n}\bigoplus\mathcal{A}_i$, where each $\mathcal{A}_i$ coincides with either Case 1 or Case 2. We write
$A=\sum\limits_{i=1}^{n}A_i$ with $A_i\in\mathcal{A}_i$ and denote the restriction of $\Delta$ in $\mathcal{A}_i$ by $\Delta_i$. It is not difficult to check that $\Delta(A_i) \in \mathcal{A}_i$.
Moreover, setting $A_iB_i=0$, we have $\Delta(A_i)B_i+A_i\Delta(B_i)=\Delta_i(A_i)B_i+A_i\Delta_i(B_i)=0$. By Case 1 and Case 2, each $\Delta_i$ is a sum of a derivation and a centralizer. Hence, $\Delta=\sum\limits_{i=1}^{n}\Delta_i$ is a sum of a derivation and a centralizer.
\end{proof}

\textbf{Remark} In \cite{520}, the authors prove that for a prime semisimple Banach algebra $\mathcal A$ with nontrival idempotents and a linear mapping $\Delta$ from $\mathcal A$ into itself, the condition $\Delta(A)B+A\Delta(B)=0$ for each $A$ and $B$ in $\mathcal{A}$ with $AB=0$ implies that $\Delta$ is bounded. By Lemma 4.1, we have that for a von Neumann algebra $\mathcal A$, the result holds still even if $\mathcal A$ is not prime.

Now we  prove our main result in this section.
\begin{theorem}
Let $\mathcal{A}$ be a von Neumann algebra acting on a Hilbert space $\mathcal{H}$, and $G$ be a given point in $\mathcal{A}$. If $\Delta:\mathcal{A}\rightarrow\mathcal{A}$ is a linear mapping derivable at $G$, then $\Delta=D+\phi$, where $D$ is a derivation, and $\phi$ is a centralizer. Moreover, $G$ is a full-derivable point if and only if $\mathcal C (G)=I$.
\end{theorem}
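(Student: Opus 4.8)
The plan is to prove the decomposition $\Delta = D + \phi$ first and then to read off the characterization of full-derivable points as an easy consequence; the decomposition is where essentially all the work lies, and the aim throughout is to reduce matters to Lemma 4.1, which already settles the case $G=0$.

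For the decomposition I would reuse the central-carrier splitting from the proof of Theorem 2.1. Let $P$ be the range projection of $G$ and set $Q_1 = I-\mathcal{C}(I-P)$, $Q_2 = I-\mathcal{C}(P)$, $Q_3 = I-Q_1-Q_2$, so that $\mathcal{A} = \bigoplus_{i=1}^{3}\mathcal{A}_i$ with $\mathcal{A}_i = Q_i\mathcal{A}$ each a von Neumann algebra. The first step is to show that $\Delta$ respects this decomposition, i.e.\ $\Delta(\mathcal{A}_i)\subseteq\mathcal{A}_i$; since the $Q_i$ are central, this should follow from a factorization identity of the type used in Lemma 2.2, now comparing coefficients after applying the Leibniz rule rather than the centralizer identity. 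Writing $G=\sum_i G_i$ and $\Delta_i = \Delta|_{\mathcal{A}_i}$, each $\Delta_i$ is then derivable at $G_i$ on $\mathcal{A}_i$, and it suffices to decompose each $\Delta_i$.

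On $\mathcal{A}_2$ one has $G_2 = Q_2 G = 0$ (because $Q_2\le I-P$), so $\Delta_2$ is derivable at $0$ and Lemma 4.1 applies verbatim. On $\mathcal{A}_1$ the element $G_1$ is a two-sided separating point, and on $\mathcal{A}_3$ one has $\mathcal{C}_{\mathcal{A}_3}(P_3)=\mathcal{C}_{\mathcal{A}_3}(Q_3-P_3)=Q_3$, exactly the hypotheses exploited in Lemmas 2.4 and 2.6. For these two summands I would show that derivability at $G_i$ forces the annihilator condition $AB=0\Rightarrow \Delta_i(A)B+A\Delta_i(B)=0$: running the same families of factorizations $G_i=ST$ used in the centralizer proofs (with an invertible corner and a free parameter $t$), but now expanding $\Delta_i(ST)$ by the Leibniz rule and comparing powers of $t$, should produce enough relations among the blocks $P_k\Delta_i(\cdot)P_\ell$ to yield the annihilator identity. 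Once that is in hand, Lemma 4.1 again gives $\Delta_i=D_i+\phi_i$, and summing over $i$ produces the global $\Delta=D+\phi$.

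Granting the decomposition, the characterization is quick. Applying derivability at $G$ to the factorization $I\cdot G=G$ gives $\Delta(I)G=0$. Since a derivation annihilates $I$ and a centralizer satisfies $\phi(A)=\phi(I)A$ with $\phi(I)$ central, we have $\Delta(I)=\phi(I)=:Z\in\mathcal{Z}(\mathcal{A})$, and $\phi=0$ iff $Z=0$. From $ZG=0$ and the fact that the support projection $E$ of the central (hence normal) element $Z$ is central with $EG=0$ (indeed $G\xi\in\ker Z=\ker E$ for every $\xi$), we get $E\le I-\mathcal{C}(G)$. Hence $\mathcal{C}(G)=I$ forces $E=0$, so $Z=0$, $\phi=0$, and $\Delta=D$ is a derivation; thus $G$ is full-derivable. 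Conversely, if $\mathcal{C}(G)\ne I$ then $Z_0:=I-\mathcal{C}(G)$ is a nonzero central projection with $Z_0G=0$, and $\phi_0(A):=Z_0A$ is a nonzero centralizer; the check $\phi_0(A)B+A\phi_0(B)=2Z_0AB=2Z_0G=0=\phi_0(G)$ whenever $AB=G$ shows $\phi_0$ is derivable at $G$, but it is not a derivation (as $\phi_0(I)=Z_0\ne0$ while any derivation vanishes at $I$), so $G$ is not full-derivable. The main obstacle I anticipate is the middle paragraph: both the preservation $\Delta(\mathcal{A}_i)\subseteq\mathcal{A}_i$ and, especially, the upgrade of derivability at a separating or range-full $G_i$ to the zero-product condition. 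The Leibniz rule generates extra cross terms $A\Delta(B)$ absent from the centralizer computations of Section 2, so the coefficient comparisons must be organized carefully to cancel these, with the separating and central-carrier hypotheses supplying exactly the vanishing needed; everything else, including the entire characterization of full-derivable points, is then routine.
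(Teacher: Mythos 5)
Your skeleton matches the paper's proof: the same central decomposition $Q_1=I-\mathcal{C}(I-P)$, $Q_2=I-\mathcal{C}(P)$, $Q_3=I-Q_1-Q_2$, the same coefficient-of-$t$ factorization trick to get $\Delta(\mathcal{A}_i)\subseteq\mathcal{A}_i$, and the same use of Lemma 4.1 on $\mathcal{A}_2$, where $G_2=0$. Your endgame is correct and is actually a nice improvement on the paper: from $I\cdot G=G$ you get $\Delta(I)G=0$, and since $\Delta(I)=\phi(I)$ is central, its support projection is a central projection annihilating $G$, hence is dominated by $I-\mathcal{C}(G)$ and vanishes when $\mathcal{C}(G)=I$; your counterexample $\phi_0(A)=(I-\mathcal{C}(G))A$ is exactly the paper's map $\delta(A)=Q_2A$. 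The paper instead deduces full-derivability from the stronger fact that $\Delta_1$ and $\Delta_3$ are genuine derivations, so your observation would let one get by with only ``derivation plus centralizer'' on each summand --- provided that much is actually established.

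That is where the genuine gap lies, and it is the step you yourself flag as the main obstacle. You assert that derivability at $G_1$ (respectively $G_3$) can be upgraded to the zero-product condition $AB=0\Rightarrow\Delta_i(A)B+A\Delta_i(B)=0$ by ``running the same families of factorizations'' as in Section 2, but no derivation is given, and there is no reason to expect the centralizer computations to transfer. In Lemma 2.4 the identity $\phi(G)=\phi(X)X^{-1}G$ cancels $G$ on the right and yields $\phi(X)=\phi(I)X$ in one line; the Leibniz analogue $\Delta(G)=\Delta(X)X^{-1}G+X\Delta(X^{-1}G)$ couples the two unknowns $\Delta(X)$ and $\Delta(X^{-1}G)$ and admits no such cancellation --- these cross terms are precisely why the derivation problem is much harder than the centralizer one. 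The paper does not prove this step by hand either: it invokes two nontrivial external results, namely \cite[Corollary 2.5]{512} (a map derivable at a right separating point is a Jordan derivation, hence a derivation) for $\Delta_1$, and \cite[Theorem 3.1]{511} (Pan's theorem on derivable maps) for $\Delta_3$, each essentially the main theorem of a separate paper. As written, your middle paragraph replaces the two hard ingredients of the proof with a hope; you must either cite such results or supply the substantial block-matrix computations they encapsulate. A further small error: $G_1$ is only a \emph{right} separating point of $\mathcal{A}_1$ (it has dense range $\overline{ran\,G_1}=Q_1\mathcal{H}$, but $\ker G\cap Q_1\mathcal{H}$ may be nonzero), so any argument on $\mathcal{A}_1$ relying on two-sided separation, as your proposal states, is not available.
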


\begin{proof}
Suppose the range projection of $G$ is $P$. We note that $\mathcal C (G)=\mathcal C (P)$.

Set $Q_1=I-\mathcal{C}(I-P)$, $Q_2=I-\mathcal{C}(P)$, and $Q_3=I-Q_1-Q_2$. Then we have $\mathcal{A}=\sum\limits_{i=1}^{3}\bigoplus\mathcal{A}_i=\sum\limits_{i=1}^{3}\bigoplus(Q_i\mathcal{A})$. For every $A$ in $\mathcal{A}$, we write
$A=\sum\limits_{i=1}^{3}A_i=\sum\limits_{i=1}^{3}Q_iA$.

For any central projection $Q$, setting $Q^{\bot}=I-Q$, we have $$(Q^{\bot}+t^{-1}QGA^{-1})(Q^\bot G+tQA)=G,$$ where $A$ is an arbitrary invertible element in $\mathcal{A}$, and $t$ is an arbitrary nonzero element in $\mathbb{C}$. So we obtain
$$\Delta(G)=(Q^{\bot}+t^{-1}QGA^{-1})\Delta(Q^{\bot}G+tQA)+\Delta(Q^{\bot}+t^{-1}QGA^{-1})(Q^{\bot}G+tQA).$$
Considering the coefficient of $t$, we obtain $Q^{\bot}\Delta(QA)+\Delta(Q^{\bot})(QA)=0$. Since the ranges of $Q$ and $Q^{\bot}$ are disjoint, it follows that $Q^{\bot}\Delta(QA)=0$ and so $\Delta(QA)\in Q\mathcal{A}.$
Since $Q_i$ are central projections, we have $\Delta(\mathcal{A}_i)\subseteq \mathcal{A}_i.$

Denote the restriction of $\Delta$ to $\mathcal{A}_i$ by $\Delta_i$. Setting $A_iB_i=G_i$, it is not difficult to check that $\Delta_i(G_i)=\Delta(A_i)B_i+A_i\Delta(B_i)$.

Since $Q_1\leq P$, we have $\overline{ran G_1}=\overline{ran Q_1G}=Q_1H$. So $G_1$ is a right separating point in $\mathcal{A}_1$. By \cite[Corallary 2.5]{512}, $\Delta_1$ is a Jordan derivation  and so is a derivation on $\mathcal{A}_1$.

Since $Q_2\leq I-P$, we have $G_2=Q_2G=0$. By Lemma 4.1, $\Delta_2$ is a sum of a derivation and a centralizer on $\mathcal{A}_2$.

Note that $\overline{ran G_3}=\overline{ran Q_3G}=Q_3P=P_3$. As we proved before, $\mathcal{C}_{\mathcal{A}_3}(P_3)=\mathcal{C}_{\mathcal{A}_3}(Q_3-P_3)=Q_3$. So by \cite[Theorem 3.1]{511}, $\Delta_3$ is a derivation on $\mathcal{A}_3$.

Hence, $\Delta=\sum\limits_{i=1}^{3}\Delta_i$ is a sum of a derivation and a centralizer.

If $\mathcal{C}(G)=I$, then $Q_2=0$, $\mathcal{A}=\mathcal{A}_1\bigoplus\mathcal{A}_3$ and $G=G_1+G_3$ is a full-derivable point.
If $\mathcal{C}(G)\neq I$, then $Q_2\neq0$. Define a linear mapping $\delta:\mathcal{A}\rightarrow\mathcal{A}$ by $\delta(A)=A_2$ for all $A\in \mathcal{A}$. One can check that $\delta$ is not a derivation but derivable at $G$. Thus $G$ is not a full-derivable point.
\end{proof}

As an application, we obtain the following corollary.

\begin{corollary}
Let $\mathcal{A}$ be a von Neumann algebra. Then $\mathcal{A}$ is a factor if and only if every nonzero element $G$ in $\mathcal{A}$ is a full-derivable point.
\end{corollary}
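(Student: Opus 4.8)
The plan is to deduce this directly from Theorem 4.2, which reduces the question of whether a point $G$ is a full-derivable point to the single condition $\mathcal{C}(G)=I$. So I would translate the factor condition into a statement about central carriers. Recall that a von Neumann algebra $\mathcal{A}$ is a factor precisely when its center is $\mathbb{C}I$, equivalently when the only central projections in $\mathcal{A}$ are $0$ and $I$. Since each central carrier $\mathcal{C}(G)$ is by definition a central projection, the whole argument hinges on comparing $\mathcal{C}(G)$ against this dichotomy.

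For the forward direction, I would assume $\mathcal{A}$ is a factor and take any nonzero $G$. First I would observe that $\mathcal{C}(G)=0$ forces $G=0$: indeed, $\mathcal{C}(G)=0$ means the supremum $P$ of all central projections annihilating $G$ equals $I$, and since a join of central projections each killing $G$ still kills $G$, this gives $G=IG=0$. Hence $G\neq 0$ implies $\mathcal{C}(G)\neq 0$; being a nonzero central projection in a factor, $\mathcal{C}(G)$ must equal $I$. Theorem 4.2 then tells us that $G$ is a full-derivable point.

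For the converse I would argue by contraposition. If $\mathcal{A}$ is not a factor, there is a central projection $Q$ with $Q\neq 0$ and $Q\neq I$. I would then compute $\mathcal{C}(Q)$: the largest central projection annihilating $Q$ is $I-Q$, so $P=I-Q$ and $\mathcal{C}(Q)=I-P=Q\neq I$. Taking $G=Q$, which is nonzero, Theorem 4.2 shows that $Q$ is not a full-derivable point, contradicting the hypothesis that every nonzero element is full-derivable. Therefore $\mathcal{A}$ must be a factor.

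Since Theorem 4.2 already carries all the analytic weight, I do not expect any genuine obstacle here. The only points requiring a little care are the elementary observations that $\mathcal{C}(G)=0$ if and only if $G=0$, and that $\mathcal{C}(Q)=Q$ for a central projection $Q$; both follow immediately from the definition of the central carrier as $I-P$, where $P$ is the join of the central projections annihilating the given element.
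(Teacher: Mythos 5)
Your proof is correct. The forward direction is the same as the paper's: in a factor every nonzero $G$ has $\mathcal{C}(G)=I$, so Theorem 4.2 applies (you additionally justify the step that $G\neq 0$ forces $\mathcal{C}(G)\neq 0$, which the paper leaves implicit). In the converse direction you take a genuinely different, and arguably cleaner, route: you compute $\mathcal{C}(Q)=Q\neq I$ for a nontrivial central projection $Q$ and then invoke the ``only if'' half of the biconditional in Theorem 4.2 to conclude that $Q$ is not a full-derivable point. The paper instead re-exhibits an explicit counterexample: it defines $\delta(A)=(I-P)A$ and asserts that $\delta$ is derivable at $P$ but is not a derivation --- which is exactly the specialization to $G=P$ of the construction inside the proof of Theorem 4.2, where $\delta(A)=Q_2A$ with $Q_2=I-\mathcal{C}(P)$, since for central $P$ one has $\mathcal{C}(P)=P$. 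Your version treats Theorem 4.2 as a black box, which is more economical and avoids repeating the verification that $\delta$ is derivable at $P$; the paper's version is self-contained at this point and shows the reader the concrete failing map. The only extra ingredient your route needs is the identity $\mathcal{C}(Q)=Q$ for central projections, and your derivation of it from the definition of the central carrier is correct.
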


\begin{proof}
If $\mathcal{A}$ is a factor, for each nonzero element $G$ in $\mathcal{A}$, we know that $\mathcal C (G)=I$. By Theorem 4.2, $G$ is a full-derivable point.

If $\mathcal{A}$ is not a factor, then there exists a nontrival central projection $P$. Define a linear mapping $\delta:\mathcal{A}\rightarrow\mathcal{A}$ by $\delta(A)=(I-P)A$ for all $A\in \mathcal{A}$. One can check that $\delta$ is not a derivation but derivable at $P$. Thus $P$ is not a full-derivable point.
\end{proof}

\emph{Acknowledgements}. This paper was partially supported by National Natural Science Foundation of China(Grant No. 11371136).

\end{document}